\def\N{\mathbb{N}}
\def\C{\mathbb{C}}
\newtheorem{prop}{\bf Proposition}[section]
\newtheorem{thm}[prop]{\bf Theorem}
\newtheorem{cor}[prop]{\bf Corollary}
\newtheorem{lem}[prop]{\bf Lemma}
\newtheorem{rmk}[prop]{\it Remark}
\begin{document}

\title[Geometric Property (T) and Kazhdan projections]{{\bf\Large Geometric Property (T) and Kazhdan projections}}

\author[I. Vergara]{Ignacio Vergara}
\address{Departamento de Matem\'atica y Ciencia de la Computaci\'on, Universidad de Santiago de Chile, Las Sophoras 173, Estaci\'on Central 9170020, Chile}

\email{ign.vergara.s@gmail.com}
\thanks{This work is supported by the FONDECYT project 3230024.}

\makeatletter
\@namedef{subjclassname@2020}{%
  \textup{2020} Mathematics Subject Classification}
\makeatother

\subjclass[2020]{Primary 22D55; Secondary 46L45, 51F30}
%

\keywords{Geometric Property (T), Kazhdan projection, maximal uniform Roe algebra}

\begin{abstract}
We characterise Geometric Property (T) by the existence of a certain projection in the maximal uniform Roe algebra $C_{u,\max}^*(X)$, extending the notion of Kazhdan projection for groups to the realm of metric spaces. We also describe this projection in terms of the decomposition of the metric space into coarsely connected components.
\end{abstract}


\begingroup
\def\uppercasenonmath#1{} 
\let\MakeUppercase\relax 
\maketitle
\endgroup

\section{{\bf Introduction}}

\subsection{Kazhdan's Property (T)}
Property (T) is an analytic property of groups that can be seen as a rigidity phenomenon for unitary representations. It was introduced by Kazhdan \cite{Kaz} as a tool to prove that certain lattices in Lie groups are finitely generated, and it has since become one of the most important concepts in analytic group theory, providing connections with several branches of mathematics. 

Let $G$ be a (discrete) group. We say that $G$ has Property (T) if there is a finite subset $Q\subset G$ and a constant $c>0$ such that, for every unitary representation $\pi$ of $G$ on a Hilbert space $\mathcal{H}$,
\begin{align}\label{Kazh_prop}
\max_{s\in Q}\|\pi(s)\xi-\xi\|\geq c\|\xi\|,\quad\forall\xi\in (\mathcal{H}^\pi)^\perp,
\end{align}
where $(\mathcal{H}^\pi)^\perp$ denotes the orthogonal complement of the space of $\pi$-invariant vectors. Moreover, if this holds, $G$ is finitely generated, and any finite generating set $Q$ will satisfy \eqref{Kazh_prop} for some constant $c>0$. We refer the reader to \cite{BedlHVa} for a detailed account on Property (T) and its many characterisations and applications.

Property (T) is often regarded as a strong negation of amenability. A group $G$ is said to be amenable if the space $\ell^\infty(G)$ admits a left-invariant mean; we refer the reader to \cite{Jus} for details. Amenability is an obstruction to Property (T) in the following sense: if $G$ is amenable and satisfies Property (T), then it is necessarily finite; see e.g. \cite[Theorem 1.1.6]{BedlHVa}.

\subsection{Geometric Property (T)}
In connection to their work on the coarse Baum--Connes conjecture, Willett and Yu \cite{WiYu2} introduced Geometric Property (T) as an analogue of Kazhdan's Property (T) in the context of metric spaces. These ideas were further developed in \cite{WiYu}, where a new, equivalent definition of Geometric Property (T) was given. This definition is quite similar in spirit to the one for groups, but the absence of a group structure requires one to consider a different kind of representation, and a new notion of invariant vector.

We will focus on metric spaces $(X,d)$ such that the distance $d$ is allowed to take the value $\infty$. We will always assume that $(X,d)$ has \textit{bounded geometry}, meaning that, for every $R>0$,
\begin{align*}
\sup_{x\in X}|B(x,R)|<\infty,
\end{align*}
where $B(x,R)$ denotes the ball of radius $R$ centred at $x$. A typical example of such a space is the disjoint union of a family of finite graphs of uniformly bounded degree. We define the \textit{tube} of diameter $R$ as
\begin{align*}
\operatorname{Tube}_X(R)=\{(x,y)\in X\times X\ \mid\ d(x,y)\leq R\}.
\end{align*}
We say that a subset $E$ of $X\times X$ is \textit{controlled} if there is $R\in(0,\infty)$ such that $E\subseteq\operatorname{Tube}_X(R)$. Given two subsets $E,F\subset X\times X$, their \textit{composition} is defined as
\begin{align*}
E\circ F=\{(x,y)\in X\times X\ \mid\ \exists z\in X,\ (x,z)\in E,\ (z,y)\in F\}.
\end{align*}
We define inductively $E^{\circ (n+1)}=E\circ E^{\circ n}$, where $E^{\circ 1}=E$. We say that a controlled set $E$ is \textit{generating} if, for any controlled set $F$, there is $n\geq 1$ such that $F\subseteq E^{\circ n}$. We say that $X$ is \textit{monogenic} if there exists a controlled generating set inside $X$.

Two points $x,y$ are in the same \textit{coarse component} if the set $\{(x,y)\}$ is controlled, which is equivalent to the fact that $d(x,y)<\infty$. This is an equivalence relation and the equivalence classes are called \textit{coarse components} of $X$. The space is \textit{coarsely connected} if there is only one coarse component. One can show that a monogenic coarsely connected metric space is coarsely geodesic; see e.g. \cite[Proposition 2.57]{Roe}. We will use the same convention as in \cite{WiYu}:\\

\noindent\textbf{Convention:} Throughout this paper, a \textit{space} will mean a bounded geometry, monogenic metric space with at most countably many coarse components.\\

The \textit{translation algebra} $\C_u[X]$ is the set of matrices $T=(T_{x,y})_{x,y\in X}$ with coefficients in $\C$, satisfying
\begin{align*}
\sup_{x,y\in X}|T_{x,y}|<\infty,
\end{align*}
and such that the support
\begin{align*}
\operatorname{supp}(T)=\{(x,y)\in X\times X\ \mid\ T_{x,y}\neq 0\}
\end{align*}
is a controlled set. The usual matrix operations and adjoint make $\C_u[X]$ into a $\ast$-algebra. The elements of $\C_u[X]$ are also called \textit{finite propagation operators}. 

A \textit{partial translation} on $X$ is a bijective map $t:A\to B$, where $A, B$ are subsets of $X$, and such that the graph
\begin{align*}
\operatorname{graph}(t)=\{(x,y)\in B\times A\ \mid\ x=t(y)\}
\end{align*}
is a controlled set. Partial translations can be viewed as elements of $\C_u[X]$ in the following way. If $t:A\to B$ is a partial translation, we can identify it with the matrix $v\in\C_u[X]$ defined by
\begin{align}\label{t-->v}
v_{x,y}=\begin{cases}
1, & t(y)=x\\
0,& \text{otherwise}.
\end{cases}
\end{align}
Observe that $\operatorname{supp}(v)=\operatorname{graph}(t)$. We will also refer to $v$ as a partial translation.

A \textit{representation} of $\C_u[X]$ on a Hilbert space $\mathcal{H}$ is a unital $\ast$-homomorphism $\pi:\C_u[X]\to\mathcal{B}(\mathcal{H})$, where $\mathcal{B}(\mathcal{H})$ stands for the algebra of bounded operators on $\mathcal{H}$. We say that $\xi\in\mathcal{H}$ is an \textit{invariant vector} if
\begin{align*}
\pi(v)\xi=\pi(vv^*)\xi,
\end{align*}
for every partial translation $v$. We denote by $\mathcal{H}^\pi$ the subspace of invariant vectors of $\mathcal{H}$.

Now we can give the definition of Geometric Property (T) as in \cite[Definition 3.4]{WiYu}. We refer the reader to that same paragraph for the analogy with Property (T) for groups. We say that a space $X$ has Property (T) if, for every controlled generating set $E\subseteq X$, there exists $c>0$ such that, for every representation $\pi:\C_u[X]\to\mathcal{B}(\mathcal{H})$ and every $\xi\in(\mathcal{H}^\pi)^\perp$, there is a partial translation $v$ with support in $E$ satisfying
\begin{align*}
\|(\pi(v)-\pi(vv^*))\xi\|\geq c\|\xi\|.
\end{align*}

This property not only is inspired by the one for groups, it also provides a characterisation in the residually finite case. More precisely, a residually finite group has property (T) if and only if the box space associated to a nested family of finite-index subgroups with trivial intersection has Property (T); see \cite[Lemma 7.3]{WiYu2} or \cite[Theorem 7.1]{WiYu} for details.

The notion of amenability for spaces also plays a crucial role in this work. As in the case of groups, it is defined in terms of the existence of an invariant mean; see Section \ref{Ss_amen} for details. Moreover, for infinite coarsely connected spaces, non-amenability is equivalent to Property (T); see \cite[Corollary 6.1]{WiYu}. We will also give an alternative proof of this fact in Lemmas \ref{Lem_no_T->amen} and \ref{Lem_Kazh_infinite}.

\subsection{Main results}
For groups, Property (T) can be characterised by the existence of a certain projection in the full group C${}^*$-algebra; see \cite[Lemma 2]{AkeWal} or \cite[Proposition 2]{Val}. The main goal of this paper is to extend this result to spaces by looking at the maximal uniform Roe algebra instead.

Let $X$ be a space. The C${}^*$-algebra $C_{u,\max}^*(X)$ is defined as the completion of $\C_u[X]$ for the norm
\begin{align*}
\|T\|_{C_{u,\max}^*(X)}=\sup\left\{\|\pi(T)\|\ \mid\ \pi\text{ is a representation of }\C_u[X]\right\}.
\end{align*}
By \cite[Lemma 3.4]{GoWaYu}, this supremum is finite, so this is a well-defined norm. We call $C_{u,\max}^*(X)$ the \textit{maximal uniform Roe algebra} of $X$.

In order to precisely state our main result, we need to consider the following subalgebra of $\C_u[X]$:

\begin{align}\label{def_A_X}
\mathcal{A}_X=\left\{A\in\C_u[X] \,\mid\, \exists c\in\C,\ \forall x\in X,\ \sum_{y\in X}A_{x,y}=\sum_{y\in X}A_{y,x}=c\right\}.
\end{align}
This is a $\ast$-subalgebra of $\C_u[X]$, and the map $\theta_X:\mathcal{A}_X\to\C$ defined by
\begin{align}\label{def_theta}
\theta_X(A)=\sum_{y\in X}A_{x,y}
\end{align}
is a $\ast$-homomorphism. We will denote by $\mathcal{A}_X^+$ the subset of $\mathcal{A}_X$ given by all those $A$ such that $A_{x,y}\geq 0$ for all $x,y\in X$. We also define $\left(\mathcal{A}_X^+\right)_1=\mathcal{A}_X^+\cap\theta_X^{-1}(\{1\})$.

We may now state the characterisation of Property (T) for spaces. Recall that a projection in a C${}^*$-algebra is an element $P$ such that $P=P^*=P^2$.

\begin{thm}\label{Thm_char_T}
Let $X$ be a space. The following are equivalent:
\begin{itemize}
\item[i)] The space $X$ has Property (T).
\item[ii)] There exists a projection $P$ in $C_{u,\max}^*(X)$ such that, for every representation $\pi:\C_u[X]\to\mathcal{B}(\mathcal{H})$, $\pi(P)$ is the orthogonal projection onto the subspace of invariant vectors $\mathcal{H}^\pi$.
\end{itemize}
Moreover, in this case, $P$ lies in the closure of $\left(\mathcal{A}_X^+\right)_1$ inside $C_{u,\max}^*(X)$.
\end{thm}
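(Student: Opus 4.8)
The plan is to prove the two implications separately, using the analogy with the group case as a guide.

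For the direction ii) $\Rightarrow$ i), suppose $P$ is such a projection. First I would note that since $P \in C_{u,\max}^*(X)$, it is a limit of finite propagation operators, so for a fixed controlled generating set $E$ we can approximate $P$ by some $T \in \C_u[X]$ with $\|P - T\|_{C_{u,\max}^*(X)}$ small and $\operatorname{supp}(T) \subseteq E^{\circ n}$ for some $n$. The key point is that $1 - P$ acts as the identity on $(\mathcal{H}^\pi)^\perp$ in every representation, so for $\xi \in (\mathcal{H}^\pi)^\perp$ of norm one we have $\|\pi(1-T)\xi\| \leq \|P - T\|$ small, hence $\|\pi(T)\xi - \xi\|$ is small, i.e. $T$ is ``almost invariant'' on the non-invariant part. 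Then I would expand $T$ (or rather $T - \theta$ applied to a suitable normalisation) as a combination of partial translations with support in $E$ — using that any finite propagation operator with support in a controlled set decomposes, up to bounded coefficients, into finitely many partial translations supported in $E$ (a standard fact from the theory of uniform Roe algebras, essentially a matching/Hall-type argument on bounded degree graphs) — and deduce that if \emph{every} partial translation $v$ with support in $E$ satisfied $\|(\pi(v) - \pi(vv^*))\xi\| < c\|\xi\|$ with $c$ tiny, then $T$ could not move $\xi$ back to near itself, a contradiction. This gives Property (T) with respect to $E$, hence (by the equivalence of the definition over all controlled generating sets) Property (T).

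For the direction i) $\Rightarrow$ ii), I would first handle the case where $X$ is coarsely connected. If $X$ is finite, $C_{u,\max}^*(X)$ contains a natural rank-one-type averaging projection and one checks directly. If $X$ is infinite and coarsely connected, then by the cited results non-amenability is equivalent to Property (T), and one shows $\mathcal{H}^\pi = 0$ in every representation, so $P = 0$ works; conversely amenability forces a non-trivial invariant vector. The substantive case is building $P$: I would produce a sequence $A_k \in (\mathcal{A}_X^+)_1$ that is Cauchy in $C_{u,\max}^*(X)$ and whose limit is the desired projection. The idea, mirroring Valette's and Akemann–Walter's constructions, is to take $A_k$ to be normalised powers (or Cesàro-type averages) of a fixed self-adjoint element of $(\mathcal{A}_X^+)_1$ coming from the generating set $E$ — concretely something like the normalised ``combinatorial Laplacian-type'' averaging operator $M_E$ with $\theta_X(M_E) = 1$ — and use the spectral gap provided by Property (T): on the orthogonal complement of the invariant vectors, $\pi(M_E)$ has norm bounded away from $1$ \emph{uniformly over all representations}, so $\pi(M_E)^k \to \pi(P)$ in norm, uniformly, which is exactly convergence in $C_{u,\max}^*(X)$. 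The fact that each $M_E^k$ lies in $(\mathcal{A}_X^+)_1$ (closure under multiplication, positivity of entries, and $\theta_X$ being multiplicative with $\theta_X(M_E^k)=1$) gives the final ``moreover'' clause.

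To pass from the coarsely connected case to the general one, I would use the decomposition of $X$ into coarsely connected components $X = \bigsqcup_i X_i$ (countably many, by the standing convention): representations and invariant vectors, as well as the relevant algebras, decompose compatibly along components, and one assembles the component projections $P_i$ into a single element of $C_{u,\max}^*(X)$. Here one must be careful that the assembled operator still has the right norm and really defines an element of the maximal completion — this is where the monogenic/bounded geometry hypotheses and the structure of $C_{u,\max}^*(X)$ for disjoint unions enter — but morally it reduces to the connected case componentwise.

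The main obstacle I anticipate is the \emph{uniformity} of the spectral gap in the construction of $P$: Property (T) as defined gives, for a fixed generating set $E$, a constant $c>0$ working for \emph{all} representations simultaneously, but translating ``some partial translation supported in $E$ moves $\xi$ by at least $c$'' into ``$\|\pi(M_E)\xi\| \leq (1-\delta)\|\xi\|$ for a uniform $\delta$'' requires care, since the operator $M_E$ aggregates many partial translations and one needs a clean quantitative inequality relating $\|\pi(v)\xi - \pi(vv^*)\xi\|$ for the individual $v$'s to the deficiency of $\pi(M_E)$ on $(\mathcal{H}^\pi)^\perp$. Getting this inequality right — and ensuring it is genuinely representation-independent — is the technical heart of the argument; once it is in place, norm-convergence of $\pi(M_E)^k$ in $C_{u,\max}^*(X)$ and all the algebraic bookkeeping with $\mathcal{A}_X$ and $\theta_X$ are routine.
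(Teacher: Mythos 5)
Your direction ii)$\Rightarrow$i) contains a genuine error and a missing idea. For $\xi\in(\mathcal{H}^\pi)^\perp$ of norm one, the hypothesis gives $\pi(P)\xi=0$, so approximating $P$ by a finite propagation $T$ yields $\|\pi(T)\xi\|\leq\|P-T\|$: $T$ nearly \emph{annihilates} $\xi$. Your claim that $\|\pi(1-T)\xi\|\leq\|P-T\|$, hence that $\|\pi(T)\xi-\xi\|$ is small, is backwards (in fact $\|\pi(T)\xi-\xi\|\geq 1-\|P-T\|$), and the contradiction you aim for never closes: if $\xi$ is almost invariant under the partial translations appearing in a decomposition of $T$, all you get is $\pi(T)\xi\approx\pi(\Phi_X(T))\xi$, and nothing forces $\Phi_X(T)$ to be close to $1$ — for instance, when $\mathcal{H}^\pi=\{0\}$ for every $\pi$ one has $P=0$ and may take $T=0$, yet Property (T) still has to be proved in that case. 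The ingredient you are missing is a way to convert almost-invariant vectors (living in a sequence of different representations, almost invariant on larger and larger tubes $\operatorname{Tube}_X(n)$, as provided by the failure of (T) via Lemma \ref{Lem_forall_exists}) into a genuine invariant vector. The paper does this with an ultraproduct of representations (Proposition \ref{Prop_ultralim}); once $\xi=(\xi_n)_{\mathcal{U}}$ is exactly invariant, the contradiction is immediate, since $\|\pi(P)\xi\|=1$ while $\lim_{\mathcal{U}}\|\pi_n(P)\xi_n\|=0$ (Lemma \ref{Lem_SC2}). Some such limiting device (ultraproduct, or equivalently the amenability dichotomy of Lemma \ref{Lem_no_T->amen}, itself proved by ultraproducts) appears unavoidable, and your sketch has no substitute for it.

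For i)$\Rightarrow$ii) your core plan — powers of an averaging element $M_E\in(\mathcal{A}_X^+)_1$ built from a generating set, a spectral gap on $(\mathcal{H}^\pi)^\perp$ uniform over all representations, hence a Cauchy sequence in $C_{u,\max}^*(X)$ converging to the projection, with the ``moreover'' clause for free — is exactly the paper's route (Lemma \ref{Lem_NC}). However, you explicitly defer the uniform gap inequality as an ``anticipated obstacle'', and that inequality is the actual content of the implication: the paper obtains it by decomposing every partial translation supported in $E$ as $\sum_i f_iA_i$ with the $A_i$ finite propagation permutation matrices produced once and for all from a Vizing edge-colouring (Corollary \ref{Cor_Viz}, Lemma \ref{Lem_dec_v}); the (T)-inequality then forces some unitary $\pi(A_j)$ to move $\xi$ by at least $c/n$, and the parallelogram law gives $\|\pi(A)\xi\|\leq\tilde{\delta}\|\xi\|$ with $\tilde{\delta}<1$ independent of $\pi$ and $\xi$. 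Your proposed reduction to the coarsely connected case is also structurally off: Property (T) of every component does not imply Property (T) of $X$ (box spaces of residually finite groups without (T) have all components finite, hence each has (T)), and assembling the componentwise projections — entries $1/|X_n|$ on finite components — produces an operator of unbounded propagation whose convergence in $C_{u,\max}^*(X)$ requires precisely the uniform-across-components gap that only (T) for the whole space provides; the paper instead runs the averaging argument directly on all of $X$ and deduces the component description afterwards (Corollary \ref{Cor_ccc}).
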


We see from the definition of the norm of $C_{u,\max}^*(X)$ that, if the projection in Theorem \ref{Thm_char_T} exists, then it is unique. We call it the \textit{Kazhdan projection} of $C_{u,\max}^*(X)$.

Since the spaces that we consider here decompose into coarsely connected components, it is natural to ask what the Kazhdan projection looks like on each component. More precisely, let
\begin{align*}
X=\bigsqcup_{n}X_n
\end{align*}
be the decomposition of $X$ into coarsely connected components. Let $Q_n:\C_u[X]\to\C_u[X_n]$ be the restriction map to $X_n$. In other words,
\begin{align*}
Q_n(T)=T\mathds{1}_{X_n},\quad\forall T\in\C_u[X],
\end{align*}
where $\mathds{1}_{X_n}$ is the partial translation associated to the identity on $X_n$. This map extends to a $\ast$-homomorphism $Q_n:C_{u,\max}^*(X)\to C_{u,\max}^*(X_n)$; see Lemma \ref{Lem_Q}. Using this fact, together with Theorem \ref{Thm_char_T}, we prove the following.

\begin{cor}\label{Cor_ccc}
Let $X$ be a space, and let $X=\bigsqcup_{n}X_n$ be its decomposition into coarsely connected components. If $X$ has Property (T), then so does $X_n$ for every $n$. Moreover, if $P$ denotes the Kazhdan projection of $C_{u,\max}^*(X)$, then $Q_n(P)$ belongs to $\C_u[X_n]$ and it is the Kazhdan projection of $C_{u,\max}^*(X_n)$. Its coefficients are given by
\begin{align*}
Q_n(P)_{x,y}=\begin{cases}
\frac{1}{|X_n|} & \text{if } X_n \text{ is finite},\\
0 & \text{otherwise},
\end{cases}
\end{align*}
for every $x,y$ in $X_n$.
\end{cor}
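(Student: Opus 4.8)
The plan is to transfer the Kazhdan projection of $X$ to each coarsely connected component by inflating representations of $\C_u[X_n]$ to representations of $\C_u[X]$ along the restriction map $Q_n$, and then to identify the coefficients of $Q_n(P)$ from the defining representation of $\C_u[X_n]$, treating finite and infinite components separately.

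First I would fix $n$ and an arbitrary representation $\rho\colon\C_u[X_n]\to\mathcal B(\mathcal H)$. Since $Q_n\colon\C_u[X]\to\C_u[X_n]$ is a unital $\ast$-homomorphism — operators of controlled support cannot connect distinct coarse components, so $Q_n$ is the compression to the block $X_n$, and $Q_n(\mathds 1_X)=\mathds 1_{X_n}$ — the composition $\pi:=\rho\circ Q_n$ is a representation of $\C_u[X]$. As $\rho$ is contractive for $\|\cdot\|_{C_{u,\max}^*(X_n)}$ and $Q_n$ extends to $C_{u,\max}^*(X)$ by Lemma~\ref{Lem_Q}, the extension of $\pi$ to $C_{u,\max}^*(X)$ is again $\rho\circ Q_n$, so $\pi(P)=\rho\bigl(Q_n(P)\bigr)$. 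The crucial point is that $Q_n$ sends partial translations on $X$ to partial translations on $X_n$ and fixes partial translations already supported on $X_n$; combining this with the identities $\pi(v)=\rho(Q_n(v))$, $\pi(vv^*)=\rho(Q_n(vv^*))$, one checks that the invariant subspaces coincide, $\mathcal H^\pi=\mathcal H^\rho$. Applying Theorem~\ref{Thm_char_T}\,ii) to $X$ (which has Property~(T)), the operator $\rho(Q_n(P))$ is the orthogonal projection onto $\mathcal H^\pi=\mathcal H^\rho$. Since $\rho$ was arbitrary and $Q_n(P)$ is a projection in $C_{u,\max}^*(X_n)$ (a $\ast$-homomorphic image of $P$), the implication ii)$\Rightarrow$i) of Theorem~\ref{Thm_char_T} applied to $X_n$ shows that $X_n$ has Property~(T) and that $Q_n(P)$ is its Kazhdan projection; by the last clause of Theorem~\ref{Thm_char_T}, $Q_n(P)$ lies in the closure of $\bigl(\mathcal A_{X_n}^+\bigr)_1$ inside $C_{u,\max}^*(X_n)$.

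To compute the coefficients, I would feed the conclusion just obtained into the defining representation $\lambda_n\colon\C_u[X_n]\to\mathcal B(\ell^2(X_n))$ given by matrix multiplication, so that $\lambda_n(Q_n(P))$ is the orthogonal projection onto $\mathcal H^{\lambda_n}$. For any $x,y\in X_n$ the singleton $\{(y,x)\}$ is controlled (as $d(x,y)<\infty$) and hence defines a partial translation; testing invariance against it forces $\xi(x)=\xi(y)$, so $\mathcal H^{\lambda_n}$ is exactly the space of constant functions in $\ell^2(X_n)$. If $X_n$ is finite, this is the line spanned by the all-ones vector, $\lambda_n(Q_n(P))$ is the orthogonal projection onto it, i.e.\ the operator whose matrix has all entries $1/|X_n|$, and since $\C_u[X_n]=C_{u,\max}^*(X_n)=M_{X_n}(\C)$ and $\lambda_n$ is faithful, $Q_n(P)_{x,y}=1/|X_n|$. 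If $X_n$ is infinite, then $\mathcal H^{\lambda_n}=\{0\}$ and $\lambda_n(Q_n(P))=0$; since $\lambda_n$ need not be faithful on $C_{u,\max}^*(X_n)$ (precisely when $X_n$ is non-amenable), I would invoke Lemma~\ref{Lem_Kazh_infinite} (which handles Kazhdan projections of infinite coarsely connected spaces) to conclude that $Q_n(P)=0\in\C_u[X_n]$, with all coefficients $0$.

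I expect the routine parts to be the bookkeeping with representations — that $\pi$ is a representation, that its extension to $C_{u,\max}^*(X)$ is $\rho\circ Q_n$, and the elementary identification of $\mathcal H^{\lambda_n}$ — while the substantive ingredient is the equality $\mathcal H^\pi=\mathcal H^\rho$. The main obstacle is the infinite case: the defining representation does not detect all of $C_{u,\max}^*(X_n)$ when $X_n$ is non-amenable, so the vanishing of $Q_n(P)$ for infinite components cannot be read off from $\lambda_n$ alone and must rely on the separate analysis in Lemmas~\ref{Lem_no_T->amen} and~\ref{Lem_Kazh_infinite} (equivalently, on \cite[Corollary 6.1]{WiYu}).
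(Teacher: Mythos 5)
Your proposal is correct and takes essentially the same route as the paper: compose an arbitrary representation of $\C_u[X_n]$ with $Q_n$, check that the invariant subspaces coincide, apply Theorem \ref{Thm_char_T} in both directions to identify $Q_n(P)$ as the Kazhdan projection of $C_{u,\max}^*(X_n)$, and handle infinite components via non-amenability (Lemma \ref{Lem_Kazh_infinite}). The only cosmetic difference is the finite case, where you read the entries off the faithful defining representation of $M_{X_n}(\C)$, whereas the paper (Lemma \ref{Lem_Kazh_finite}) directly verifies that the constant matrix with entries $1/|X_n|$ acts as the invariant projection in every representation — an equivalent computation.
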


This paper is organised as follows. In Section \ref{S_SC}, we prove the implication (ii)$\implies$(i) in Theorem \ref{Thm_char_T}. The converse is proved in Section \ref{S_NC}. Finally, we prove Corollary \ref{Cor_ccc} in Section \ref{S_cor}.

\section{{\bf Proof of sufficiency}}\label{S_SC}

In this section, we show that the existence of a Kazhdan projection implies Property (T). For this purpose, we need to review some facts about ultraproducts of representations, and the notion of amenability for spaces.

\subsection{Ultraproducts}
Ultraproducts provide a very powerful tool in functional analysis. In particular, for group representations, they allow one to construct invariant vectors as ultralimits of almost invariant vectors; see e.g. \cite[Proposition 1.1]{ChCoSt}. We briefly describe here how this idea can be extended to representations of $\C_u[X]$; for details on ultraproducts of spaces and operators, we refer the reader to \cite{Hei}.

Let $X$ be a space, and let $I$ be a directed set such that, for every $i\in I$ there is a representation $\pi_i:\C_u[X]\to\mathcal{B}(\mathcal{H}_i)$. We can consider the filter $\mathcal{F}$ on $I$ given by all the sets containing a subset of the form $\{\ i\in I\mid\ i\geq i_0\}$ for some $i_0\in I$. Now let $\mathcal{U}$ be an ultrafilter containing $\mathcal{F}$, and let
\begin{align*}
\mathcal{H}=(\mathcal{H}_i)_{\mathcal{U}}
\end{align*}
be the ultraproduct Hilbert space. We can define a representation $\pi:\C_u[X]\to\mathcal{B}(\mathcal{H})$ by
\begin{align*}
\pi(T)(\eta_i)_{\mathcal{U}}=(T\eta_i)_{\mathcal{U}},\quad\forall T\in\C_u[X],\ \forall(\eta_i)_{\mathcal{U}}\in\mathcal{H}.
\end{align*}
This construction allows us to obtain invariant vectors in the following way.

\begin{prop}\label{Prop_ultralim}
Let $X$ be a space, and let $\pi_i:\C_u[X]\to\mathcal{B}(\mathcal{H}_i)$ be a net of representations. Assume that there is a net of unit vectors $\xi_i\in\mathcal{H}_i$ such that, for every partial translation $v$,
\begin{align*}
\lim_i\|\pi_i(v)\xi_i-\pi_i(vv^*)\xi_i\|=0.
\end{align*}
Then, taking the ultraproduct $\mathcal{H}=(\mathcal{H}_i)_{\mathcal{U}}$ as above, the unit vector $\xi=(\xi_i)_{\mathcal{U}}$ belongs to $\mathcal{H}^\pi$.
\end{prop}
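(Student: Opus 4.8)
The plan is to show directly that $\pi(v)\xi=\pi(vv^*)\xi$ for an arbitrary partial translation $v$, which is precisely the condition for $\xi$ to lie in $\mathcal{H}^\pi$. The key observation is that the norm in an ultraproduct Hilbert space is computed as an ultralimit of the norms in the factors: for any element $(\eta_i)_{\mathcal{U}}\in\mathcal{H}$ one has $\|(\eta_i)_{\mathcal{U}}\|=\lim_{\mathcal{U}}\|\eta_i\|$. Applying this to the element $\pi(v)\xi-\pi(vv^*)\xi$ and using the definition of $\pi$ on $\mathcal{H}=(\mathcal{H}_i)_{\mathcal{U}}$, we get
\begin{align*}
\|\pi(v)\xi-\pi(vv^*)\xi\|=\left\|\bigl(\pi_i(v)\xi_i-\pi_i(vv^*)\xi_i\bigr)_{\mathcal{U}}\right\|=\lim_{\mathcal{U}}\|\pi_i(v)\xi_i-\pi_i(vv^*)\xi_i\|.
\end{align*}

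The remaining point is to pass from the hypothesis, which is stated as a limit along the directed set $I$ (equivalently, along the filter $\mathcal{F}$ of eventual sets), to the ultralimit along $\mathcal{U}$. Since $\mathcal{U}\supseteq\mathcal{F}$, any net that converges to a limit $\ell$ along $\mathcal{F}$ also converges to $\ell$ along $\mathcal{U}$: for every $\varepsilon>0$ the set where the net is within $\varepsilon$ of $\ell$ belongs to $\mathcal{F}\subseteq\mathcal{U}$. Hence from $\lim_i\|\pi_i(v)\xi_i-\pi_i(vv^*)\xi_i\|=0$ we conclude $\lim_{\mathcal{U}}\|\pi_i(v)\xi_i-\pi_i(vv^*)\xi_i\|=0$, and therefore $\pi(v)\xi=\pi(vv^*)\xi$. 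As $v$ was an arbitrary partial translation, $\xi\in\mathcal{H}^\pi$. Finally, $\xi$ is a unit vector because $\|\xi\|=\lim_{\mathcal{U}}\|\xi_i\|=1$, each $\xi_i$ being a unit vector; in particular $\xi\neq 0$, so this genuinely produces a nonzero invariant vector.

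I expect no serious obstacle here: the argument is essentially bookkeeping about ultraproducts. The one thing to be careful about is that $\pi$ is well defined and bounded on $\mathcal{H}$ — that is, that $(T\eta_i)_{\mathcal{U}}$ depends only on $(\eta_i)_{\mathcal{U}}$ and defines a bounded operator — but this is standard (it follows from $\|\pi_i(T)\|\leq\|T\|_{C^*_{u,\max}(X)}$ uniformly in $i$, using \cite[Lemma 3.4]{GoWaYu}), and it was already asserted in the construction preceding the statement, so I would simply invoke it. The only mild subtlety worth a sentence is that the hypothesis is phrased with $\lim_i$ over the directed set while the ultraproduct uses $\mathcal{U}$; making explicit that $\mathcal{F}\subseteq\mathcal{U}$ handles this cleanly.
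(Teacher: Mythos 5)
Your argument is correct and is essentially the paper's own proof: compute the norm of $\pi(v)\xi-\pi(vv^*)\xi$ as an ultralimit and use the hypothesis, noting $\|\xi\|=\lim_{\mathcal{U}}\|\xi_i\|=1$. The paper leaves the passage from the limit along the directed set to the $\mathcal{U}$-limit implicit, which you spell out via $\mathcal{F}\subseteq\mathcal{U}$; that is a harmless elaboration, not a different route.
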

\begin{proof}
First observe that
\begin{align*}
\|\xi\|=\lim_{\mathcal{U}}\|\xi_i\|=1,
\end{align*}
so $\xi$ is indeed a unit vector. Moreover, for every partial translation $v$,
\begin{align*}
\|\pi(v)\xi-\pi(vv^*)\xi\|=\lim_{\mathcal{U}}\|\pi_i(v)\xi_i-\pi_i(vv^*)\xi_i\|=0,
\end{align*}
which shows that $\xi$ belongs to $\mathcal{H}^\pi$.
\end{proof}

\subsection{Amenability}\label{Ss_amen}
A space $X$ is said to be \textit{amenable} if $\ell^\infty(X)$ admits an invariant mean. More precisely, there is a positive unital linear functional $\phi:\ell^\infty(X)\to\C$ such that, for every $f\in\ell^\infty(X)$, and every partial translation $t:A\to B$ such that $B$ contains the support of $f$,
\begin{align*}
\phi(f\circ t)=\phi(f).
\end{align*}
Here we view $f\circ t$ as an element of $\ell^\infty(X)$ by extending it by $0$ outside of $A$. The following lemma will not be used in the proofs of our results. We state it here with the sole purpose of emphasising that, although this definition of amenability naturally extends the one for groups, it has some rather counter-intuitive properties.

\begin{lem}
Let $X$ be a space and let $X_0$ be a coarse component of $X$. If $X_0$ is amenable, then so is $X$.
\end{lem}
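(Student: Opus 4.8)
The plan is to push the invariant mean on $X_0$ forward to $X$ by restriction. Let $\phi_0\colon\ell^\infty(X_0)\to\C$ be an invariant mean, and define $\phi\colon\ell^\infty(X)\to\C$ by $\phi(f)=\phi_0(f|_{X_0})$, where $f|_{X_0}$ denotes the restriction of $f$ to $X_0$ regarded as an element of $\ell^\infty(X_0)$. Since $f\mapsto f|_{X_0}$ is a positive, unital, linear map $\ell^\infty(X)\to\ell^\infty(X_0)$, the functional $\phi$ is automatically positive, unital and linear, so the entire content of the statement is the invariance condition.

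The key point is that partial translations cannot move points across distinct coarse components. Indeed, if $t\colon A\to B$ is a partial translation, then $\operatorname{graph}(t)\subseteq\operatorname{Tube}_X(R)$ for some finite $R$, so whenever $x=t(y)$ one has $d(x,y)\leq R<\infty$, i.e. $x$ and $y$ lie in the same coarse component. Hence $t(A\cap X_0)\subseteq X_0$; applying the same argument to $t^{-1}$, whose graph is the flip of $\operatorname{graph}(t)$ and is again controlled, gives $t^{-1}(B\cap X_0)\subseteq X_0$. Therefore $t$ restricts to a bijection $t_0\colon A\cap X_0\to B\cap X_0$, and since $\operatorname{graph}(t_0)\subseteq\operatorname{graph}(t)$ is controlled, $t_0$ is a partial translation on $X_0$.

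Now let $f\in\ell^\infty(X)$ with $\operatorname{supp}(f)\subseteq B$. I would check directly from the definitions that $(f\circ t)|_{X_0}=(f|_{X_0})\circ t_0$ as elements of $\ell^\infty(X_0)$: for $x\in A\cap X_0$ one has $(f\circ t)(x)=f(t(x))=f(t_0(x))=(f|_{X_0})(t_0(x))$, because $t(x)=t_0(x)\in X_0$, while both functions vanish on $X_0\setminus(A\cap X_0)$. Moreover $\operatorname{supp}(f|_{X_0})=\operatorname{supp}(f)\cap X_0\subseteq B\cap X_0$, so $t_0$ is admissible for $f|_{X_0}$. Invariance of $\phi_0$ then yields
\begin{align*}
\phi(f\circ t)=\phi_0\big((f\circ t)|_{X_0}\big)=\phi_0\big((f|_{X_0})\circ t_0\big)=\phi_0(f|_{X_0})=\phi(f),
\end{align*}
so $\phi$ is an invariant mean on $\ell^\infty(X)$, and $X$ is amenable. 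The degenerate cases $A\cap X_0=\emptyset$ or $X$ coarsely connected cause no trouble.

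The main — and essentially only — obstacle is the observation that partial translations respect the decomposition into coarse components; everything else is routine unwinding of the definitions.
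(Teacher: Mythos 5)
Your proof is correct and follows the same route as the paper: push the invariant mean forward via $f\mapsto f|_{X_0}$. The paper simply asserts that the resulting functional is invariant, whereas you supply the (correct) verification that partial translations preserve coarse components, so your write-up is essentially the paper's proof with the details filled in.
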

\begin{proof}
Let $\phi\in\ell^\infty(X_0)^*$ be an invariant mean. Define $\tilde{\phi}:\ell^\infty(X)\to\C$ by
\begin{align*}
\tilde{\phi}(f)=\phi(f|_{X_0}),\quad\forall f\in\ell^\infty(X),
\end{align*}
where $f|_{X_0}$ denotes the restriction of $f$ to $X_0$. Then $\tilde{\phi}$ is an invariant mean on $\ell^\infty(X)$.
\end{proof}

Observe that there is a copy of $\ell^{\infty}(X)$ inside $\C_u[X]$ given by the subalgebra of diagonal matrices. The following lemma is a reformulation of \cite[Proposition 6.1]{WiYu}.

\begin{lem}\label{Lem_char_am}
Let $X$ be a space. The following are equivalent:
\begin{itemize}
\item[(i)] The space $X$ is amenable.
\item[(ii)] There exists a representation $\pi:\C_u[X]\to\mathcal{B}(\mathcal{H})$ such that $\mathcal{H}^\pi\neq\{0\}$.
\end{itemize}
\end{lem}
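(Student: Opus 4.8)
The statement to prove is Lemma~\ref{Lem_char_am}, the equivalence between amenability of $X$ and the existence of a representation of $\C_u[X]$ with a nonzero invariant vector. The plan is to establish both implications by translating between the measure-theoretic notion of invariant mean on $\ell^\infty(X)$ and the operator-algebraic notion of invariant vector for a representation, exploiting the embedding of $\ell^\infty(X)$ into $\C_u[X]$ as the diagonal subalgebra.

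For the implication (i)$\implies$(ii), I would start from an invariant mean $\phi$ on $\ell^\infty(X)$ and use it to build a state on $\C_u[X]$, or more directly build a representation with an invariant vector via a GNS-type construction. Concretely, one wants a representation $\pi$ on some $\mathcal H$ together with a unit vector $\xi$ satisfying $\pi(v)\xi=\pi(vv^*)\xi$ for all partial translations $v$. A natural candidate is to look at the representation arising from the weak-$*$ limit defining $\phi$: realise $\phi$ as an ultralimit along an ultrafilter $\mathcal U$ of normalised indicator functions $|F_n|^{-1}\mathds 1_{F_n}$ of a sequence of Følner-type sets (these exist precisely because $X$ is amenable), take the regular representation of $\C_u[X]$ on $\ell^2(X)$, form the ultraproduct $\mathcal H=(\ell^2(X))_{\mathcal U}$ as in the previous subsection, and let $\xi=(|F_n|^{-1/2}\mathds 1_{F_n})_{\mathcal U}$. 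Then Proposition~\ref{Prop_ultralim} applies: for a partial translation $v=t:A\to B$, the quantity $\|\pi_i(v)\xi_i-\pi_i(vv^*)\xi_i\|^2$ measures (up to constants) the relative size of $F_n$ outside $A$ together with the symmetric difference of $t(F_n\cap A)$ and $F_n$, which tends to $0$ along the Følner net; hence $\xi\in\mathcal H^\pi\setminus\{0\}$. The main obstacle here is making the Følner condition for spaces (as opposed to groups) interact cleanly with arbitrary partial translations, but this is exactly the content of \cite[Proposition 6.1]{WiYu}, which the lemma is a reformulation of, so I would cite it for the precise Følner characterisation.

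For the converse (ii)$\implies$(i), suppose $\pi:\C_u[X]\to\mathcal B(\mathcal H)$ has a nonzero invariant vector $\xi$, which we may take to be a unit vector. Restricting $\pi$ to the diagonal copy of $\ell^\infty(X)\subset\C_u[X]$ gives a unital $\ast$-representation of $\ell^\infty(X)$ on $\mathcal H$, and I define $\phi:\ell^\infty(X)\to\C$ by $\phi(f)=\langle\pi(f)\xi,\xi\rangle$. This is a positive unital linear functional, so it is a mean. Invariance is the point: given $f\in\ell^\infty(X)$ and a partial translation $t:A\to B$ with $B\supseteq\operatorname{supp}(f)$, one checks the operator identity $v^*fv=f\circ t$ in $\C_u[X]$ (where $v$ is the matrix associated to $t$), using that $v^*v=\mathds 1_A$, $vv^*=\mathds 1_B$ and that $f$ is supported in $B$. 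Then
\[
\phi(f\circ t)=\langle\pi(v^*fv)\xi,\xi\rangle=\langle\pi(f)\pi(v)\xi,\pi(v)\xi\rangle,
\]
and since $\xi$ is invariant, $\pi(v)\xi=\pi(vv^*)\xi=\pi(\mathds 1_B)\xi$; because $f$ is supported in $B$, $\pi(f)\pi(\mathds 1_B)=\pi(f)$ and $\pi(\mathds 1_B)\pi(f)=\pi(f)$, so the right-hand side collapses to $\langle\pi(f)\xi,\xi\rangle=\phi(f)$. Hence $\phi$ is an invariant mean and $X$ is amenable.

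The only genuinely delicate point is verifying the algebraic identity $v^*fv=f\circ t$ at the level of $\C_u[X]$ and checking that all supports are controlled so that these products make sense as finite-propagation operators; this is a direct but slightly fiddly matrix computation using the definitions \eqref{t-->v} and \eqref{def_A_X}-adjacent conventions, together with the hypothesis $\operatorname{supp}(f)\subseteq B$ which is built into the definition of invariant mean. I expect the forward direction (i)$\implies$(ii) to be the one requiring the external input, and I would simply invoke \cite[Proposition 6.1]{WiYu} for the Følner characterisation of amenable spaces rather than reproving it.
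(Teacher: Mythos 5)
Your proposal is correct and follows essentially the same route as the paper: the direction (ii)$\implies$(i) via the state $f\mapsto\langle\pi(f)\xi,\xi\rangle$ and the identity $v^*fv=f\circ t$ with $vv^*=\mathds{1}_B$, and the direction (i)$\implies$(ii) by quoting Willett--Yu's F\o lner/almost-invariant-vector characterisation and feeding the resulting net in $\ell^2(X)$ into Proposition~\ref{Prop_ultralim}. The only cosmetic difference is that the paper cites \cite[Lemma 6.1]{WiYu} directly for the almost-invariant unit vectors rather than rebuilding them from F\o lner sets as you sketch.
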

\begin{proof}
Assume first that (ii) holds, and let $\xi$ be a unit vector in $\mathcal{H}^\pi$. We define a unital linear map $\phi:\C_u[X]\to\C$ by
\begin{align*}
\phi(T)=\langle\pi(T)\xi,\xi\rangle,\quad\forall T\in\C_u[X].
\end{align*}
We claim that the restriction of $\phi$ to $\ell^\infty(X)$ is an invariant mean. Indeed, let $f\in\ell^\infty(X)$, $t:A\to B$ a partial translation such that $B$ contains the support of $f$, and let $v$ be the element of $\C_u[X]$ associated to $t$ as in \eqref{t-->v}. Recall that we view $f\circ t$ as an element of $\ell^\infty(X)$ by extending it by $0$ outside of $A$. Observing that $f\circ t=v^*fv$ and $vv^*=\mathds{1}_B$, we see that
\begin{align*}
\phi(f\circ t)&=\langle\pi(v^*fv)\xi,\xi\rangle\\
&=\langle\pi(fv)\xi,\pi(v)\xi\rangle\\
&=\langle\pi(f)\pi(vv^*)\xi,\pi(vv^*)\xi\rangle\\
&=\langle\pi(f)\xi,\xi\rangle\\
&=\phi(f).
\end{align*}
Finally, the identity $\phi(|f|^2)=\|\pi(f)\xi\|^2$ shows that $\phi$ is positive. We conclude that $\phi$ is an invariant mean.\\
Conversely, assume that (i) holds. By \cite[Lemma 6.1]{WiYu}, there is a net of unit vectors $(\xi_i)_{i\in I}$ in $\ell^2(X)$ such that, for every partial translation $v$,
\begin{align*}
\|v\xi_i-vv^*\xi_i\|\to 0.
\end{align*}
By Proposition \ref{Prop_ultralim}, the ultraproduct representation has a non-trivial invariant vector.
\end{proof}

In \cite[Corollary 6.1]{WiYu}, it was proved that, for infinite coarsely connected spaces, Property (T) is equivalent to non-amenability. Although the coarse connectedness assumption is needed for one direction of the equivalence, the other one holds in full generality. In order to prove this, we will use the following characterisation of Property (T); for a proof see \cite[Proposition 3.1]{WiYu}.

\begin{lem}[Willett--Yu]\label{Lem_forall_exists}
Let $X$ be a space. The following are equivalent:
\begin{itemize}
\item[(i)] The space $X$ has Property (T).
\item[(ii)] There exist a controlled generating set $E\subseteq X$ and $c>0$ such that, for every representation $\pi:\C_u[X]\to\mathcal{B}(\mathcal{H})$ and every $\xi\in(\mathcal{H}^\pi)^\perp$, there is a partial translation $v$ with support in $E$ satisfying
\begin{align*}
\|(\pi(v)-\pi(vv^*))\xi\|\geq c\|\xi\|.
\end{align*}
\end{itemize}
\end{lem}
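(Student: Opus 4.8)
The plan is to prove the non‑trivial implication (ii)$\,\Rightarrow\,$(i) by a \emph{change of generating set} argument, in the spirit of the classical proof for groups; the converse is immediate, since a monogenic space admits at least one controlled generating set and Property~(T) then supplies a constant for it. A preliminary observation I would record first is that, by bounded geometry, every controlled set $F$ is a \emph{finite} disjoint union of graphs of partial translations: if $F\subseteq\operatorname{Tube}_X(R)$, then, regarding $F$ as the edge set of a bipartite graph on two copies of $X$, all vertex degrees are bounded by $\sup_x|B(x,R)|$, so König's edge–colouring theorem (plus a compactness argument when $X$ is infinite) splits $F$ into at most $\sup_x|B(x,R)|$ matchings, i.e. partial translations.

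The heart of the matter is a quantitative estimate, proved by induction on $n$: for a fixed controlled generating set $E$ there are finite constants $C_n$, depending only on $n$ and the bounded geometry of $X$, such that
\[
\|\pi(w)\xi-\pi(ww^*)\xi\|\ \le\ C_n\,\sup\bigl\{\|\pi(v)\xi-\pi(vv^*)\xi\|\ :\ v\text{ a partial translation},\ \operatorname{supp}(v)\subseteq E\bigr\}
\]
for every representation $\pi\colon\C_u[X]\to\mathcal B(\mathcal H)$, every $\xi\in\mathcal H$, and every partial translation $w$ with $\operatorname{supp}(w)\subseteq E^{\circ n}$. The case $n=1$ is trivial ($C_1=1$). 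For the step from $n$ to $n+1$ I would decompose $E=\bigsqcup_a\operatorname{graph}(P_a)$ and $E^{\circ n}=\bigsqcup_b\operatorname{graph}(Q_b)$ into finitely many partial translations, so that $E^{\circ(n+1)}=\bigcup_{a,b}\operatorname{graph}(P_a\circ Q_b)$; partitioning the support of $w$ along these pieces yields $w=\sum_{a,b}w_{a,b}$ with the $w_{a,b}$ partial translations of pairwise disjoint support, which forces $ww^*=\sum_{a,b}w_{a,b}w_{a,b}^*$ and hence $\pi(w)\xi-\pi(ww^*)\xi=\sum_{a,b}\bigl(\pi(w_{a,b})\xi-\pi(w_{a,b}w_{a,b}^*)\xi\bigr)$. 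Each $w_{a,b}$ factors as $w_{a,b}=s_{a,b}r_{a,b}$, where $r_{a,b}$ is the restriction of $Q_b$ to the domain of $w_{a,b}$ (so $\operatorname{supp}(r_{a,b})\subseteq E^{\circ n}$) and $s_{a,b}$ is the restriction of $P_a$ to the range of $r_{a,b}$ (so $\operatorname{supp}(s_{a,b})\subseteq E$); with these choices one has the partial–isometry identities $r_{a,b}r_{a,b}^*=s_{a,b}^*s_{a,b}$ and $s_{a,b}s_{a,b}^*=w_{a,b}w_{a,b}^*$, and a short manipulation gives
\[
\pi(w_{a,b})\xi-\pi(w_{a,b}w_{a,b}^*)\xi=\pi(s_{a,b})\bigl(\pi(r_{a,b})\xi-\pi(r_{a,b}r_{a,b}^*)\xi\bigr)+\bigl(\pi(s_{a,b})\xi-\pi(s_{a,b}s_{a,b}^*)\xi\bigr).
\]
Since $\pi(s_{a,b})$ is a contraction, the triangle inequality together with the induction hypothesis bounds each summand by $(C_n+1)$ times the supremum above; summing over the finitely many pairs $(a,b)$ gives the estimate with $C_{n+1}$ a finite product of $(C_n+1)$ with the numbers of pieces of $E$ and of $E^{\circ n}$.

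To finish, let $E$ be an arbitrary controlled generating set and let $(E_0,c_0)$ be the pair furnished by (ii). Since $E$ is generating and $E_0$ is controlled, $E_0\subseteq E^{\circ n}$ for some $n$. Given $\pi$ and $\xi\in(\mathcal H^\pi)^\perp$, (ii) provides a partial translation with support in $E_0\subseteq E^{\circ n}$ along which $\|\pi(v)\xi-\pi(vv^*)\xi\|\ge c_0\|\xi\|$, and the estimate above then forces $\sup\{\|\pi(u)\xi-\pi(uu^*)\xi\| : \operatorname{supp}(u)\subseteq E\}\ge (c_0/C_n)\|\xi\|$; hence some partial translation $u$ with support in $E$ satisfies $\|\pi(u)\xi-\pi(uu^*)\xi\|\ge\tfrac{c_0}{2C_n}\|\xi\|$, which is Property~(T) for $E$ with constant $c=c_0/(2C_n)$.

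The step I expect to be the main obstacle is the combinatorial bookkeeping in the inductive estimate: on the one hand the finite decomposition of controlled sets into partial translations, and — more delicately — arranging the domains and ranges of $r_{a,b}$ and $s_{a,b}$ so that the partial–isometry identities hold exactly, since these are precisely what make the telescoping identity for $\pi(w_{a,b})\xi-\pi(w_{a,b}w_{a,b}^*)\xi$ valid. By contrast, a purely qualitative (ultraproduct) argument does not seem to suffice here, because an almost invariant net of vectors orthogonal to the invariant subspaces may have an invariant ultralimit without any contradiction, so one genuinely needs the uniform constant; the remaining analytic ingredients (that $\pi$ is a $*$‑homomorphism, that partial translations map to contractions, and the triangle inequality) are routine.
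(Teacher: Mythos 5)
Your proof is correct, but note that the paper itself does not prove this lemma: it is quoted from Willett--Yu, and the reader is referred to \cite[Proposition 3.1]{WiYu}, so there is no internal argument to compare yours against. What you give is a sound, self-contained proof of the nontrivial implication (ii)$\Rightarrow$(i), and both pillars check out. The finite decomposition of a controlled set into graphs of partial translations, via bounded geometry and bipartite edge-colouring, is the same device the paper uses for tubes in Lemma~\ref{Lem_dec_v} (there via Vizing's theorem, Corollary~\ref{Cor_Viz}). Your inductive transfer of almost-invariance from $E$ to $E^{\circ n}$ is exact as written: disjoint pieces of the graph of a bijection have disjoint domains and ranges, so indeed $ww^*=\sum_{a,b}w_{a,b}w_{a,b}^*$, and the telescoping identity is an algebraic identity, following from $s_{a,b}\,r_{a,b}r_{a,b}^*=s_{a,b}s_{a,b}^*s_{a,b}=s_{a,b}$ together with $s_{a,b}s_{a,b}^*=w_{a,b}w_{a,b}^*$, while $\pi(s_{a,b})$ is a contraction because partial translations are mapped to partial isometries. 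The constants $C_n$ depend only on $E$, $n$ and the geometry of $X$, so the resulting constant $c=c_0/(2C_n)$ is uniform in $\pi$ and $\xi$, as the definition of Property (T) requires, and the factor $2$ correctly handles the possibly unattained supremum. Your closing remark is also on point: the ultraproduct machinery the paper uses elsewhere (Proposition~\ref{Prop_ultralim}, Lemma~\ref{Lem_SC2}) cannot by itself replace the quantitative estimate here, since orthogonality to the invariant subspaces need not survive the ultralimit; a uniform bound over compositions of the generating set is genuinely needed.
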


\begin{rmk}
Lemma \ref{Lem_forall_exists} says that it suffices to consider one generating set in the definition of Property (T).
\end{rmk}

\begin{lem}\label{Lem_no_T->amen}
Let $X$ be a space. If $X$ does not have Property (T), then it is amenable.
\end{lem}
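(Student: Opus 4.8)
The strategy is to apply Lemma~\ref{Lem_char_am}: it is enough to exhibit a single representation of $\C_u[X]$ with a non-zero invariant vector, and the natural way to produce it is as an ultraproduct of representations that carry almost-invariant vectors. Since $X$ is monogenic, fix a controlled generating set $E$. As $X$ does not have Property~(T), the negation of condition~(ii) of Lemma~\ref{Lem_forall_exists}, instantiated at this $E$ and at the constant $c=1/n$, supplies for every $n\in\N$ a representation $\pi_n:\C_u[X]\to\mathcal{B}(\mathcal{H}_n)$ and a non-zero vector $\xi_n$ (lying in $(\mathcal{H}_n^{\pi_n})^\perp$, though we will not use this) with $\|\pi_n(v)\xi_n-\pi_n(vv^*)\xi_n\|<\tfrac1n\|\xi_n\|$ for every partial translation $v$ supported in $E$; the vector is non-zero because the displayed inequality is impossible for $\xi_n=0$, and we normalise it to a unit vector. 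The plan is now to upgrade this to almost-invariance under \emph{all} partial translations and invoke Proposition~\ref{Prop_ultralim}.

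The upgrade rests on two elementary stability properties. First, if $w$ is a partial translation supported in $E$ and $u$ is any partial translation, then $wuu^*=w\mathds{1}_{\operatorname{ran}(u)}$ is again supported in $E$, and $(wuu^*)(wuu^*)^*=(wu)(wu)^*=\mathds{1}_{\operatorname{ran}(wu)}$; since $\pi_n(w)$ is a contraction and $wuu^*$ is supported in $E$, this yields $\|\pi_n(wu)\xi_n-\pi_n((wu)(wu)^*)\xi_n\|\le\|\pi_n(u)\xi_n-\pi_n(uu^*)\xi_n\|+\tfrac1n$. Iterating, a product $v$ of $k$ partial translations supported in $E$ satisfies $\|\pi_n(v)\xi_n-\pi_n(vv^*)\xi_n\|\le k/n$. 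Second, if $v=\sum_{l=1}^{M}v_l$ where the $v_l$ are partial translations with pairwise disjoint domains and pairwise disjoint ranges, then $v$ is a partial translation and $vv^*=\sum_l v_lv_l^*$, so the estimate for $v$ is bounded by the sum of those for the $v_l$.

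It remains to decompose an arbitrary partial translation $v$ into such a sum of products of $E$-supported partial translations. As $X$ has bounded geometry, $E$ is a bipartite graph of bounded degree, hence splits into finitely many matchings: $E=\bigsqcup_{i=1}^{N}\operatorname{graph}(t_i)$ for partial translations $t_i$ supported in $E$. Distributing composition over unions, $E^{\circ k}=\bigcup_{(i_1,\dots,i_k)}\operatorname{graph}(t_{i_1}\cdots t_{i_k})$, a finite union of graphs of $k$-fold products of the $t_i$. Given $v$, its support lies in some $\operatorname{Tube}_X(R)$ and hence, $E$ being generating, in $E^{\circ k}$ for some $k$; partitioning $\operatorname{supp}(v)$ among the finitely many graphs $\operatorname{graph}(t_{i_1}\cdots t_{i_k})$ that meet it writes $v=\sum_{l=1}^{M}v_l$ with $M\le N^k$, the $v_l$ having pairwise disjoint domains and ranges (subsets of $\operatorname{dom}(v)$ and $\operatorname{ran}(v)$), and each $v_l$ a product of $k$ partial translations supported in $E$. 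Combined with the previous paragraph, $\|\pi_n(v)\xi_n-\pi_n(vv^*)\xi_n\|\le kN^{k}/n\to0$ as $n\to\infty$, the constant depending only on $E$ and $v$.

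Proposition~\ref{Prop_ultralim} then applies: choosing a free ultrafilter $\mathcal{U}$ on $\N$, the ultraproduct representation $\pi$ on $\mathcal{H}=(\mathcal{H}_n)_{\mathcal{U}}$ has the unit invariant vector $\xi=(\xi_n)_{\mathcal{U}}$, so $\mathcal{H}^\pi\neq\{0\}$ and Lemma~\ref{Lem_char_am} gives that $X$ is amenable. No step here is genuinely hard; the one requiring a little care is the decomposition in the third paragraph, together with the observation that the resulting constant $kN^{k}$ is independent of $n$.
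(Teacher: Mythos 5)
Your argument is correct, but it takes a genuinely different route through its key step, so a comparison is in order. Both proofs share the same skeleton: negate condition (ii) of Lemma~\ref{Lem_forall_exists}, feed the resulting almost-invariant unit vectors into the ultraproduct of Proposition~\ref{Prop_ultralim}, and conclude with Lemma~\ref{Lem_char_am}. The paper, however, exploits the fact that the negation is available for \emph{every} controlled generating set: for each $n$ it invokes it (with $c=1/n$) for a generating set containing $\operatorname{Tube}_X(n)$, so each fixed partial translation is eventually supported in the set being used, and almost-invariance under all partial translations holds in the ultralimit with no further work. You instead fix one generating set $E$ once and for all and prove a propagation estimate: splitting $E$ into $N$ matchings, writing $E^{\circ k}$ as a finite union of graphs of $k$-fold products, and cutting an arbitrary partial translation $v$ along these graphs gives $\|\pi_n(v)\xi_n-\pi_n(vv^*)\xi_n\|\le kN^k/n$ with $k,N$ depending only on $E$ and $v$. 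This is more laborious --- it essentially rebuilds, for this direction, the colouring/decomposition technology the paper deploys only for the necessity direction (Corollary~\ref{Cor_Viz}, Lemma~\ref{Lem_dec_v}) --- but it buys a quantitative fact of independent interest: almost-invariance under $E$-supported partial translations propagates, with constants depending only on $E$ and the support of $v$, to all partial translations, uniformly over representations. Two points you should state explicitly to make the argument airtight: the splitting $E=\bigsqcup_{i=1}^N\operatorname{graph}(t_i)$ for an infinite bounded-degree bipartite graph requires K\"onig's (or Vizing's) theorem together with the compactness argument of Corollary~\ref{Cor_Viz}; and each piece $v_l$ is a product of $E$-supported partial translations only after replacing the innermost factor $t_{i_k}$ by its restriction $t_{i_k}\mathds{1}_{\operatorname{dom}(v_l)}$, whose graph still lies in $E$. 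Both are routine, and the remaining details (non-vanishing of $\xi_n$, the contraction bound $\|\pi_n(w)\|\le 1$, disjointness of domains giving $vv^*=\sum_l v_lv_l^*$) are handled correctly.
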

\begin{proof}
By Lemma \ref{Lem_forall_exists}, for every $n\geq 1$, there is a representation $\pi_n:\C_u[X]\to\mathcal{B}(\mathcal{H}_n)$ and a unit vector $\xi_n\in(\mathcal{H}_n^{\pi_n})^\perp$ such that, for every partial translation $v$ with support in $\operatorname{Tube}_X(n)$,
\begin{align*}
\|(\pi_n(v)-\pi_n(vv^*))\xi_n\|< \frac{1}{n}.
\end{align*}
By Proposition \ref{Prop_ultralim}, the ultraproduct representation has a non-trivial invariant vector. Therefore, by Lemma \ref{Lem_char_am}, $X$ is amenable.
\end{proof}

\subsection{Proof of Property (T)}
Now we are ready to prove that the existence of the Kazhdan projection implies Property (T).

\begin{lem}\label{Lem_SC2}
Let $X$ be a space, and assume that there is $P$ in $C_{u,\max}^*(X)$ such that, for every representation $\pi:\C_u[X]\to\mathcal{B}(\mathcal{H})$, $\pi(P)$ is the orthogonal projection onto the subspace of invariant vectors $\mathcal{H}^\pi$. Then $X$ has Property (T).
\end{lem}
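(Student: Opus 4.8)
The plan is to prove the contrapositive: assuming $X$ does not have Property (T), I will show that no such $P$ can exist. By Lemma \ref{Lem_forall_exists}, for each $n \geq 1$ there is a representation $\pi_n : \C_u[X] \to \mathcal{B}(\mathcal{H}_n)$ and a unit vector $\xi_n \in (\mathcal{H}_n^{\pi_n})^\perp$ such that $\|(\pi_n(v) - \pi_n(vv^*))\xi_n\| < \tfrac{1}{n}$ for every partial translation $v$ with support in $\operatorname{Tube}_X(n)$. The key point is that, since $\pi_n(P)$ is by hypothesis the orthogonal projection onto $\mathcal{H}_n^{\pi_n}$, and $\xi_n$ lies in the orthogonal complement of that subspace, we have $\pi_n(P)\xi_n = 0$ for every $n$.

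Next I would use the fact that $P$ lies in the completion $C_{u,\max}^*(X)$ of $\C_u[X]$: given $\varepsilon > 0$, pick $T \in \C_u[X]$ with $\|P - T\|_{C_{u,\max}^*(X)} < \varepsilon$. The finite propagation operator $T$ has support in some fixed tube $\operatorname{Tube}_X(R)$, and it can be written as a finite linear combination of partial translations, say $T = \sum_{j=1}^{m} \lambda_j v_j$ with each $v_j$ supported in $\operatorname{Tube}_X(R)$ (this is a standard decomposition of finite propagation operators, using bounded geometry to split the bounded-radius support into finitely many partial bijections). For $n > R$, the almost-invariance estimate applies to each $v_j$, so $\|\pi_n(v_j)\xi_n - \pi_n(v_j v_j^*)\xi_n\| < \tfrac{1}{n}$. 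I would then consider the ultraproduct representation $\pi = (\pi_n)_{\mathcal{U}}$ on $\mathcal{H} = (\mathcal{H}_n)_{\mathcal{U}}$ along some ultrafilter $\mathcal{U}$ refining the Fréchet filter, and set $\xi = (\xi_n)_{\mathcal{U}}$. By Proposition \ref{Prop_ultralim}, $\xi$ is an invariant unit vector in $\mathcal{H}^\pi$. On the one hand, since $\pi(P)$ is the orthogonal projection onto $\mathcal{H}^\pi$, we get $\pi(P)\xi = \xi$, so $\|\pi(P)\xi\| = 1$. On the other hand, $\pi(P)\xi = \lim_{\mathcal{U}}(\pi_n(P)\xi_n)_{\mathcal{U}}$, and $\pi_n(P)\xi_n = 0$ for all $n$, so $\pi(P)\xi = 0$. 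This contradiction shows that $P$ cannot exist, completing the proof.

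Alternatively, and perhaps more cleanly, one can avoid the ultraproduct entirely: from $\pi_n(P)\xi_n = 0$ and $\|\xi_n\| = 1$ one has $\|\pi_n(P)\| \cdot \|\xi_n\|$ giving no immediate contradiction, so the ultraproduct is the natural device to manufacture a genuine invariant vector $\xi$ with $\pi(P)\xi = 0$ and $\|\xi\| = 1$ simultaneously, contradicting that $\pi(P)$ fixes $\mathcal{H}^\pi$ pointwise. I would therefore keep the ultraproduct argument as the core.

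The main obstacle I anticipate is a subtle one rather than a technical one: making sure that the hypothesis on $P$ is genuinely being used for the \emph{ultraproduct} representation $\pi$, not merely for the individual $\pi_n$. The statement grants that $\pi(P)$ is the orthogonal projection onto $\mathcal{H}^\pi$ for \emph{every} representation, and the ultraproduct $\pi$ is indeed a bona fide representation of $\C_u[X]$ (as recalled in the discussion preceding Proposition \ref{Prop_ultralim}), so the hypothesis does apply to it. The only thing to check carefully is that $\pi(P)\xi$ really equals the $\mathcal{U}$-limit $(\pi_n(P)\xi_n)_{\mathcal{U}}$; this follows because $\pi(P)$ is the operator-norm limit of $\pi(T_k)$ for $T_k \in \C_u[X]$ converging to $P$ in $C_{u,\max}^*(X)$, and for finite propagation operators the action on the ultraproduct is coordinatewise by construction, so one passes to the limit using $\|\pi(T_k)\| \leq \|T_k\|_{C_{u,\max}^*(X)}$ uniformly. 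Once that continuity point is in place, the contradiction $1 = 0$ is immediate.
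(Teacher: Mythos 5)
Your argument is essentially the paper's own proof: assuming failure of Property (T), you invoke Lemma \ref{Lem_forall_exists} to get representations $\pi_n$ and unit vectors $\xi_n\in(\mathcal{H}_n^{\pi_n})^\perp$ with almost-invariance on $\operatorname{Tube}_X(n)$, pass to the ultraproduct via Proposition \ref{Prop_ultralim}, and derive the contradiction $1=\|\pi(P)\xi\|=\lim_{\mathcal{U}}\|\pi_n(P)\xi_n\|=0$. The only differences are cosmetic: the digression about decomposing an approximant $T$ into partial translations is never used and could be deleted, while your careful justification that $\pi(P)$ acts coordinatewise (via norm-approximation of $P$ by finite propagation operators, uniformly over the $\pi_n$) is a detail the paper leaves implicit and is worth keeping.
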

\begin{proof}
Assume by contradiction that $X$ does not have Property (T). Thus, by Lemma \ref{Lem_forall_exists}, for every $n\geq 1$, there is a representation $\pi_n:\C_u[X]\to\mathcal{B}(\mathcal{H}_n)$ and a unit vector $\xi_n\in(\mathcal{H}_n^{\pi_n})^\perp$ such that, for every partial translation $v$ with support in $\operatorname{Tube}_X(n)$,
\begin{align*}
\|(\pi_n(v)-\pi_n(vv^*))\xi_n\|< \frac{1}{n}.
\end{align*}
Now we take the ultraproduct $\mathcal{H}=(\mathcal{H}_n)_{\mathcal{U}}$ for some ultrafilter $\mathcal{U}$ on $\N$. By Proposition \ref{Prop_ultralim}, $\xi=(\xi_n)_{\mathcal{U}}$ is a non-trivial invariant vector for the ultraproduct representation. However, since $\xi_n$ belongs to $(\mathcal{H}_n^{\pi_n})^\perp$, we have
\begin{align*}
1=\|\pi(P)\xi\|=\lim_{\mathcal{U}}\|\pi_n(P)\xi_n\|=0,
\end{align*}
which is a contradiction.
\end{proof}

\section{{\bf Proof of necessity}}\label{S_NC}

For the proof of the implication (i)$\implies$(ii) in Theorem \ref{Thm_char_T}, we will need some facts about colourings on graphs. 

\subsection{Graphs and colourings}
Let $X=(V,E)$ be a graph and $d\geq 1$ an integer. We say that a map $c:E\to\{1,\ldots,d\}$ is a $d$-colouring of $E$ if, whenever $e_1, e_2$ are two adjacent edges, we have $c(e_1)\neq c(e_2)$. The following result was proved in \cite{Viz}; see also \cite[Theorem 5.3.2]{Die}.

\begin{thm}[Vizing]\label{Thm_Viz}
Let $d\in\N$ and let $X=(V,E)$ be a finite graph such that every vertex in $X$ has degree at most $d$. Then there exists a $(d+1)$-colouring of $E$.
\end{thm}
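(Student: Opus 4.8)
The plan is to prove Vizing's theorem in the form stated, namely that a finite graph with maximum degree $d$ admits a proper edge colouring with $d+1$ colours. I would proceed by induction on the number of edges. The base case (no edges) is trivial, so suppose $X=(V,E)$ has at least one edge and every proper subgraph with fewer edges admits a $(d+1)$-colouring. Pick an edge $xy_0\in E$, delete it, and by the inductive hypothesis colour the remaining edges with colours from $\{1,\ldots,d+1\}$. The task is to modify this partial colouring so that the edge $xy_0$ can also be coloured. The key observation is that every vertex, having degree at most $d$, is incident to at most $d$ coloured edges, hence \emph{misses} at least one colour from the palette of $d+1$ colours.

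The heart of the argument is the technique of \textbf{Kempe chains} together with the construction of a \emph{fan}. I would build a maximal sequence of distinct neighbours $y_0,y_1,\ldots,y_k$ of $x$ such that the edge $xy_{i+1}$ currently carries a colour missed by $y_i$; this is the fan at $x$ starting with $y_0$. Let $\alpha$ be a colour missed by $x$ and $\beta$ a colour missed by $y_k$. If $\beta$ is also missed by $x$, one recolours $xy_i\mapsto$ (colour of $xy_{i+1}$) along the fan and assigns $\beta$ to $xy_k$, finishing. Otherwise consider the maximal path/cycle through $x$ whose edges alternate between colours $\alpha$ and $\beta$ — the $\alpha\beta$-Kempe chain from $x$. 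Swapping $\alpha$ and $\beta$ along this chain keeps the colouring proper and changes which colours are missed at its endpoints; a careful case analysis (according to whether the chain from $x$ reaches $y_k$, or reaches the vertex $y_j$ where $\beta$ was first missed along the fan, and truncating the fan at an appropriate point) shows that after the swap one can always shift colours along a suitable sub-fan and free up a colour for $xy_0$.

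I would organise the case analysis around two sub-cases: first rotate the fan so that $xy_k$ temporarily becomes uncoloured (by successively setting $xy_i$ to the colour of $xy_{i+1}$ for $i<j$ where $j$ is chosen suitably), then either $y_k$ and $x$ share a missing colour, or one performs the $\alpha\beta$-swap on the component of the $\alpha\beta$-coloured subgraph containing $x$, after which the relevant endpoint misses $\alpha$ and the fan can be completed. The only genuinely delicate point is verifying that the Kempe chain starting at $x$ cannot simultaneously end at two different fan vertices, which forces the endpoint to be a single well-defined vertex and makes the recolouring unambiguous; this is exactly where the "at most degree $d$, palette size $d+1$" slack is used.

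Since the paper only needs this statement as an external input and provides a textbook reference (\cite{Viz}, \cite[Theorem 5.3.2]{Die}), an acceptable alternative — and the one I would actually write in a paper of this kind — is simply to \emph{cite} these sources rather than reproduce the full fan-and-Kempe-chain argument. The main obstacle, should one insist on a self-contained proof, is precisely the bookkeeping in the fan rotation / Kempe chain interaction: it is elementary but notation-heavy, and getting the truncation point of the fan right in every case is the step most likely to hide an error.
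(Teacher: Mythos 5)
The paper does not actually prove this statement: it is quoted as an external input, with the proof delegated entirely to the citations \cite{Viz} and \cite[Theorem 5.3.2]{Die} (the only argument the paper supplies in this vicinity is the compactness extension to infinite graphs in Corollary \ref{Cor_Viz}, which is a separate matter). So at the level of technique there is nothing in the paper to compare against; your fallback option --- simply citing Vizing and Diestel --- is exactly what the author does, and is the appropriate choice given that the theorem is used as a black box in Lemma \ref{Lem_dec_v}. Your sketched argument is the standard fan-and-Kempe-chain proof and its skeleton is correct: induct on the number of edges, uncolour one edge $xy_0$, use the slack ``degree at most $d$, palette of size $d+1$'' to guarantee every vertex misses a colour, build a maximal fan at $x$, and either rotate the whole fan or truncate it at the vertex $y_j$ where the colour $\beta$ (missing at $y_k$) occurs on the fan, swap along the $\alpha\beta$-component containing $x$ (which is a path with $x$ as an endpoint, so it reaches at most one other distinguished fan vertex --- this is the uniqueness point you correctly flag), and then complete the rotation. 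What you present is of course an outline rather than a proof: the statement ``a careful case analysis shows that one can always free up a colour'' is precisely the content that has to be checked, and the two sub-cases (chain ends at $y_j$ versus chain ends at $y_k$ versus neither) each require verifying that the recoloured fan is still a fan after the swap. There is no error in the outline, but if a self-contained proof were demanded, that bookkeeping would still have to be written out; for the purposes of this paper, the citation suffices.
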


This result extends to infinite graphs by standard arguments. We include the proof of this fact for completeness.

\begin{cor}\label{Cor_Viz}
Let $d\in\N$ and let $X=(V,E)$ be a countable infinite graph such that every vertex in $X$ has degree at most $d$. Then there exists a $(d+1)$-colouring of $E$.
\end{cor}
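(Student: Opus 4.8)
The plan is to deduce the infinite case from Vizing's theorem (Theorem \ref{Thm_Viz}) via a compactness argument. First I would exhaust the countable vertex set $V$ by an increasing sequence of finite subsets $V_1\subseteq V_2\subseteq\cdots$ with $\bigcup_k V_k=V$, and let $X_k$ be the induced subgraph on $V_k$, with edge set $E_k=\{e\in E\mid \text{both endpoints of }e\text{ lie in }V_k\}$. Each $X_k$ is a finite graph with maximum degree at most $d$, so Theorem \ref{Thm_Viz} furnishes a $(d+1)$-colouring $c_k:E_k\to\{1,\ldots,d+1\}$.

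The key step is to pass to a limit colouring. Since every edge $e\in E$ lies in $E_k$ for all sufficiently large $k$, the sequence $(c_k(e))_k$ is eventually defined and takes values in the finite set $\{1,\ldots,d+1\}$. I would extract, by a diagonal argument (or equivalently by taking an ultralimit, as in the ultrafilter constructions used earlier in the paper), a subsequence along which $c_k(e)$ is eventually constant for every edge $e$ simultaneously; call the common eventual value $c(e)$. Concretely: enumerate $E=\{e_1,e_2,\ldots\}$, and successively refine to a nested sequence of infinite index sets $S_1\supseteq S_2\supseteq\cdots$ such that $c_k(e_j)$ is constant for $k\in S_j$ large; the diagonal sequence then stabilises on each $e_j$. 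Define $c(e_j)$ to be that stable value.

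It remains to check that $c$ is a proper $(d+1)$-colouring of $E$, i.e. that adjacent edges receive distinct colours. If $e,e'$ are adjacent, they share a vertex, so both lie in $E_k$ for all large $k$ in the diagonal sequence; for such $k$ we have $c(e)=c_k(e)$ and $c(e')=c_k(e')$, and since $c_k$ is a proper colouring, $c_k(e)\neq c_k(e')$, whence $c(e)\neq c(e')$. This completes the argument.

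The only mild subtlety — and the step I would be most careful about — is organising the diagonal extraction so that it simultaneously stabilises every edge's colour; this is the standard König's-lemma-type compactness trick and presents no real obstacle, but it must be stated cleanly since there are countably many edges and each is governed by a finite set of colour choices. One could alternatively phrase the whole limit step in one line using an ultrafilter $\mathcal{U}$ on $\N$ and setting $c(e)=\lim_{\mathcal{U}}c_k(e)$, which is well defined because the values lie in a finite set and $e\in E_k$ eventually; the verification of properness is then identical.
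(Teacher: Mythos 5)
Your proposal is correct and follows essentially the same route as the paper: exhaust the graph by finite subgraphs, apply Vizing's theorem to each, and pass to a limit colouring by compactness. The paper packages the limit step as the finite intersection property in the compact product space $\{1,\ldots,d+1\}^E$ rather than your diagonal/ultrafilter extraction, but this is only a difference in presentation.
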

\begin{proof}
Let $(X_n)$ be an increasing sequence of finite subgraphs of $X$ such that $X=\bigcup_n X_n$. Let us write $X_n=(V_n,E_n)$. Since $E_n$ is contained in $E$, every $v\in V_n$ has degree at most $d$ in $X_n$. By Theorem \ref{Thm_Viz}, $E_n$ admits a $(d+1)$-colouring. Let
\begin{align*}
K=\{1,\ldots,d+1\}^E.
\end{align*}
Recall that this is a compact space for the product topology. For each $n$, define
\begin{align*}
K_n=\left\{c\in K\ \mid\ c|_{E_n} \text{ is a } (d+1)\text{-colouring}\right\}.
\end{align*}
Then $(K_n)$ is a decreasing sequence of closed subsets of $K$. Moreover, they are non-empty by the observation above. By the finite intersection property, the set $\bigcap_n K_n$ is non-empty. Any element of this intersection is a $(d+1)$-colouring of $E$.
\end{proof}

Let now $X$ be a space. We denote by $\mathcal{S}_X$ the set of finite propagation permutation matrices. In other words, every $A$ in $\mathcal{S}_X$ is an element of $(\mathcal{A}_X^+)_1$ with coefficients in $\{0,1\}$.

\begin{rmk}\label{Rmk_perm_unit}
Since every $A\in\mathcal{S}_X$ is a permutation matrix, it is in particular a unitary. Indeed, for every $x\in X$, there is a unique $y\in X$ such that $A_{x,y}=1$, and vice versa. All the other coefficients are $0$. Hence
\begin{align*}
(A^*A)_{x,y}=\sum_{z\in X}A_{z,x}A_{z,y}
=\begin{cases}
1, &\text{if } x=y,\\
0, &\text{otherwise.}
\end{cases}
\end{align*}
The same reasoning applies to $AA^*$.
\end{rmk}

\begin{lem}\label{Lem_dec_v}
Let $X$ be a space and let $R>0$. There exist $A_0,A_1,\ldots,A_n\in\mathcal{S}_X$ such that, for every partial translation $v$ with support in $\operatorname{Tube}_X(R)$, there exist $f_0,f_1,\ldots,f_n\in\ell^\infty(X)$ with $f_i^2=f_i$ such that
\begin{align*}
v=\sum_{i=0}^n f_iA_i.
\end{align*}
Moreover, we have $vv^*=\sum_{i=0}^nf_i$.
\end{lem}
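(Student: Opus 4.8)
The plan is to produce once and for all a finite family $A_0,\dots,A_n\in\mathcal{S}_X$, depending only on $X$ and $R$, whose supports together cover the tube $\operatorname{Tube}_X(R)$, and then, given a partial translation $v$, to cut $\operatorname{supp}(v)$ into pieces, one lying inside each $\operatorname{supp}(A_i)$; the function $f_i$ will be the indicator of the set of rows occurring in the $i$-th piece. The only genuinely delicate point is the construction of the covering family, and the idea is to build it out of \emph{involutions}: a partial bijection of $X$ need not extend to a finite-propagation permutation of all of $X$, but the swap along the edges of a matching always does, and involutions are automatically permutations.

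First I would construct the family. Since $X$ is a space it is finite or countable (each coarse component is a countable increasing union of finite balls, and there are countably many components), and $n:=\sup_{x\in X}|B(x,R)|<\infty$ by bounded geometry. Let $\mathcal{G}$ be the undirected loopless graph on vertex set $X$ with an edge $\{x,y\}$ whenever $0<d(x,y)\le R$; every vertex has degree at most $n-1$, so Theorem \ref{Thm_Viz} (when $X$ is finite) or Corollary \ref{Cor_Viz} (when $X$ is infinite) provides a proper edge-colouring $c\colon E(\mathcal{G})\to\{1,\dots,n\}$. For $j=1,\dots,n$, let $\beta_j\colon X\to X$ be the involution exchanging the two endpoints of each edge of colour $j$ and fixing every other point; it has propagation at most $R$, so its matrix $A_j$ lies in $\mathcal{S}_X$. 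Put also $A_0=\mathds{1}_X$. Then $\operatorname{Tube}_X(R)\subseteq\bigcup_{i=0}^n\operatorname{supp}(A_i)$: the diagonal lies in $\operatorname{supp}(A_0)$, while any pair $(x,y)$ with $x\neq y$ and $d(x,y)\le R$ is an edge of $\mathcal{G}$, hence lies in $\operatorname{supp}(A_j)$ for $j=c(\{x,y\})$.

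Now let $v$ be a partial translation with $\operatorname{supp}(v)\subseteq\operatorname{Tube}_X(R)$, coming via \eqref{t-->v} from $t\colon A\to B$, so that $\operatorname{supp}(v)=\operatorname{graph}(t)$. Assign to each $(x,y)\in\operatorname{graph}(t)$ the least index $i$ with $(x,y)\in\operatorname{supp}(A_i)$; this partitions $\operatorname{graph}(t)=\bigsqcup_{i=0}^n P_i$ with $P_i\subseteq\operatorname{supp}(A_i)$. Let $S_i=\{x\in X:\exists y,\ (x,y)\in P_i\}$ and $f_i=\mathds{1}_{S_i}$, a projection in $\ell^\infty(X)$. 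The key verification is that $f_iA_i$ is exactly the $\{0,1\}$-matrix supported on $P_i$: since $A_i$ is a permutation matrix, $(f_iA_i)_{x,y}=1$ if and only if $x\in S_i$ and $(A_i)_{x,y}=1$, and injectivity of the underlying permutation then forces such a pair $(x,y)$ to lie in $P_i$. Summing over $i$ and using that the $P_i$ partition $\operatorname{graph}(t)$ gives $v=\sum_{i=0}^n f_iA_i$ with $f_i^2=f_i$.

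For the last assertion, recall from the proof of Lemma \ref{Lem_char_am} that $vv^*=\mathds{1}_B$. Since $t$ is a bijection onto $B$, each point of $B$ is the first coordinate of exactly one edge of $\operatorname{graph}(t)$, hence lies in exactly one $S_i$; thus the $S_i$ partition $B$, so $\sum_{i=0}^n f_i=\mathds{1}_B=vv^*$. I expect the main obstacle to be precisely the design of the covering family: the naive approach, edge-colouring the bipartite graph of distance-$\le R$ pairs, produces partial bijections that need not extend to finite-propagation permutations of $X$, so one must instead use the undirected graph and involutions, and then do the bookkeeping that shows $f_iA_i$ recaptures $P_i$ exactly.
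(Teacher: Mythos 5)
Your argument is correct and is essentially the paper's own proof: the same graph on $X$ with edges $\{x,y\}$ for $0<d(x,y)\le R$, the same use of Vizing's theorem (via Corollary \ref{Cor_Viz}) to produce involutive permutation matrices $A_i$ swapping the endpoints of each colour class, and the same indicator functions $f_i$ picking out the rows of $\operatorname{graph}(t)$ carried by each colour. Your bookkeeping via the partition $P_i$ and the identity $vv^*=\mathds{1}_B$ matches the paper's computation, so there is nothing to add.
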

\begin{proof}
Define a graph $(V,E)$ with $V=X$ and
\begin{align*}
E=\left\{ \{x,y\}\ \mid\ x\neq y,\ d(x,y)\leq R\right\}.
\end{align*}
It has bounded degree because $X$ has bounded geometry. By Corollary \ref{Cor_Viz}, there is a colouring $c:E\to\{1,\ldots,n\}$. We will extend $c$ to the diagonal of $X\times X$ by
\begin{align*}
c(\{x,x\})=0,\quad\forall x\in X.
\end{align*}
Define $A_0=1$ and $A_i$ ($i=1,\ldots,n$) by
\begin{align*}
(A_i)_{x,y}=\begin{cases}
1 & \text{if } \{x,y\}\in c^{-1}(\{i\}),\\
1 & \text{if } x=y,\ \nexists\, z\text{ such that } \{x,z\}\in c^{-1}(\{i\}),\\
0 & \text{otherwise}.
\end{cases}
\end{align*}
By definition, every coefficient of $A_i$ is either $0$ or $1$. Moreover, since $c$ is a colouring, for every $x\in X$, there is a unique $y\in X$ such that $(A_i)_{x,y}=1$, and vice versa. Thus $A_i$ is an element of $\mathcal{S}_X$. Now, given a partial translation $v$ with support in $\operatorname{Tube}_X(R)$, we define $f_i$ ($i=0,\ldots,n$) by
\begin{align*}
f_i(x)=\begin{cases}
1 & \text{if }\exists\, y\text{ such that }\{x,y\}\in c^{-1}(\{i\})\text{ and } v_{x,y}=1,\\
0 & \text{otherwise}.
\end{cases}
\end{align*}
Then we have, for every $i\in\{0,\ldots,n\}$,
\begin{align*}
f_i(x)(A_i)_{x,y}=\begin{cases}
1 & \text{if }\{x,y\}\in c^{-1}(\{i\})\text{ and } v_{x,y}=1,\\
0 & \text{otherwise},
\end{cases}
\end{align*}
which shows that
\begin{align*}
v=\sum_{i=0}^n f_iA_i.
\end{align*}
Moreover,
\begin{align*}
\sum_{i=0}^n f_i(x)=\begin{cases}
1 & \text{if }\exists y,\ v_{x,y}=1,\\
0 & \text{otherwise},
\end{cases}
\end{align*}
which means that $vv^*=\sum_{i=0}^nf_i$.
\end{proof}

\subsection{Proof of the necessary condition}

Now we will prove the implication (i)$\implies$(ii) in Theorem \ref{Thm_char_T}. For this purpose, we will need a characterisation of Property (T) in terms of diagonal operators. Recall that there is a copy of $\ell^{\infty}(X)$ in $\C_u[X]$ given by the subalgebra of diagonal matrices. We can define a map $\Phi_X:\C_u[X]\to\ell^\infty(X)$ by
\begin{align*}
\Phi_X(T)(x)=\sum_{y\in X}T_{x,y},\quad\forall T\in\C_u[X],\ \forall x\in X.
\end{align*}
Observe that $\Phi_X$ is well defined because $X$ has bounded geometry. The following was proved in \cite[Lemma 3.1]{WiYu}.

\begin{lem}[Willett--Yu]\label{Lem_WY}
Let $X$ be a space. For every representation $\pi:\C_u[X]\to\mathcal{B}(\mathcal{H})$, a vector $\xi\in\mathcal{H}$ is invariant if and only if
\begin{align*}
\pi(\Phi_X(T))\xi=\pi(T)\xi,
\end{align*}
for all $T\in\C_u[X]$.
\end{lem}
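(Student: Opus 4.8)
\textbf{Plan for the proof of Lemma \ref{Lem_WY}.}

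The plan is to prove both implications directly by unwinding the definition of invariant vector and using the decomposition result Lemma \ref{Lem_dec_v}. For the easy direction, suppose $\pi(\Phi_X(T))\xi = \pi(T)\xi$ for all $T \in \C_u[X]$. Given any partial translation $v$, I would apply the hypothesis with $T = v$: this gives $\pi(\Phi_X(v))\xi = \pi(v)\xi$. It remains to observe that $\Phi_X(v) = vv^*$ as diagonal operators; indeed $\Phi_X(v)(x) = \sum_y v_{x,y}$ equals $1$ exactly when $x$ lies in the range $B$ of the underlying partial translation $t$, which is precisely $(vv^*)_{x,x} = \mathds{1}_B(x)$. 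Hence $\pi(v)\xi = \pi(vv^*)\xi$ for every partial translation $v$, so $\xi$ is invariant.

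For the converse, suppose $\xi \in \mathcal{H}^\pi$, so $\pi(v)\xi = \pi(vv^*)\xi$ for every partial translation $v$. I want to show $\pi(\Phi_X(T))\xi = \pi(T)\xi$ for an arbitrary $T \in \C_u[X]$. The first reduction is to replace $T$ by a finite linear combination of partial translations: write $T$ with support in $\operatorname{Tube}_X(R)$, apply Lemma \ref{Lem_dec_v} to get permutation matrices $A_0,\ldots,A_n \in \mathcal{S}_X$, and decompose the partial translation-like pieces. More precisely, $T = \sum_{i=0}^n g_i A_i$ for suitable $g_i \in \ell^\infty(X)$ (not necessarily idempotent now, since $T$ is a general bounded-coefficient matrix, but $g_i A_i$ is a ``weighted partial translation''). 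Actually it is cleaner to first note both sides of the desired identity are linear in $T$, and that a general $T$ can be written as $T = \sum_i g_i A_i$ where each $A_i$ is a permutation matrix: one colours the graph with edge set $\operatorname{supp}(T) \setminus \text{diagonal}$ via Corollary \ref{Cor_Viz}, groups the support into partial bijections indexed by colour, extends each to a full permutation $A_i \in \mathcal{S}_X$, and lets $g_i$ carry the coefficients. Since $A_i$ is a unitary (Remark \ref{Rmk_perm_unit}), $A_i A_i^* = 1$, so $\Phi_X(g_i A_i) = g_i$. Thus it suffices to prove $\pi(g)\xi = \pi(g A)\xi$ for $g \in \ell^\infty(X)$ and $A \in \mathcal{S}_X$, i.e.\ $\pi(g)\pi(A)\xi = \pi(g)\xi$, which would follow from $\pi(A)\xi = \xi$.

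So the crux is: a finite propagation permutation matrix $A \in \mathcal{S}_X$ satisfies $\pi(A)\xi = \xi$ whenever $\xi$ is invariant. But $A$ is itself a partial translation with $A A^* = 1$, so by invariance $\pi(A)\xi = \pi(AA^*)\xi = \pi(1)\xi = \xi$. This closes the argument. I expect the main obstacle to be purely bookkeeping: being careful that the graph used for the colouring has bounded degree (which needs bounded geometry of $X$, applied to the controlled set $\operatorname{supp}(T)$), that the extension of each colour-class partial bijection to a genuine permutation $A_i$ of all of $X$ is legitimate and keeps finite propagation, and that the coefficient functions $g_i$ are well defined and bounded. None of these is deep, but one must check that $\operatorname{supp}(T)$ being controlled is genuinely what makes the colouring finite; this is exactly where the hypotheses on $X$ enter. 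An alternative, perhaps slicker route avoiding the re-derivation of the colouring decomposition: observe directly that every $T \in \C_u[X]$ with controlled support is a finite sum of ``generalised partial translations'' $g_i v_i$ with $v_i$ partial translations, and that $\pi(g_i v_i)\xi = \pi(g_i)\pi(v_i)\xi = \pi(g_i)\pi(v_i v_i^*)\xi = \pi(g_i v_i v_i^*)\xi = \pi(\Phi_X(g_i v_i))\xi$ using $\Phi_X(g_i v_i) = g_i v_i v_i^*$; summing over $i$ and using linearity of $\Phi_X$ gives the result. Either way the key inputs are Lemma \ref{Lem_dec_v} (or its proof) and the elementary identity $\Phi_X(v) = vv^*$ for partial translations.
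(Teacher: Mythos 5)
The paper does not prove this lemma itself---it quotes it from \cite[Lemma 3.1]{WiYu}---so there is no internal proof to compare against. Your argument is correct and follows the same standard route as the cited proof and as the machinery of Lemma \ref{Lem_dec_v}: the identity $\Phi_X(v)=vv^*$ for partial translations gives the easy direction, while the converse follows from writing $T=\sum_i g_iA_i$ with $A_i\in\mathcal{S}_X$ and $g_i\in\ell^\infty(X)$ via the edge-colouring decomposition, and then using $\pi(A_i)\xi=\pi(A_iA_i^*)\xi=\xi$ together with $\Phi_X(g_iA_i)=g_i$.
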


\begin{rmk}\label{Rmk_Phi_theta}
Recall the definition of $\mathcal{A}_X$ in \eqref{def_A_X}. The image of $\mathcal{A}_X$ under $\Phi_X$ consists of the constant functions on $X$. Moreover, for all $A\in\mathcal{A}_X$ and $x\in X$,
\begin{align}\label{Phi=theta}
\Phi_X(A)(x)=\theta_X(A),
\end{align}
where $\theta_X$ is the morphism defined in \eqref{def_theta}.
\end{rmk}

\begin{lem}\label{Lem_NC}
Let $X$ be a space with Property (T). Then there exists a projection $P$ in the closure of $(\mathcal{A}_X^+)_1$ inside $C_{u,\max}^*(X)$ such that, for every representation $\pi:\C_u[X]\to\mathcal{B}(\mathcal{H})$, $\pi(P)$ is the orthogonal projection onto the subspace of invariant vectors $\mathcal{H}^\pi$.
\end{lem}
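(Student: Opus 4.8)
The plan is to construct $P$ as a limit in $C_{u,\max}^*(X)$ of averages of the permutation matrices $A_i$ coming from Lemma \ref{Lem_dec_v}. Fix a controlled generating set $E$ and the Kazhdan constant $c>0$ provided by Property (T). For each $n\geq 1$, apply Lemma \ref{Lem_dec_v} to $R=n$ (after enlarging $E^{\circ n}$ to a tube $\operatorname{Tube}_X(R_n)$), obtaining $A_0^{(n)},\dots,A_{k_n}^{(n)}\in\mathcal{S}_X$, and set
\begin{align*}
P_n=\left(\frac{1}{k_n+1}\sum_{i=0}^{k_n}A_i^{(n)}\right)^{\!m_n}
\end{align*}
for a suitable exponent $m_n\to\infty$, or more simply consider $Q_n=\frac{1}{k_n+1}\sum_{i}A_i^{(n)}$. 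Each $Q_n$ lies in $(\mathcal{A}_X^+)_1$, and it is self-adjoint provided we symmetrise (replace $Q_n$ by $\frac12(Q_n+Q_n^*)$, which is still in $(\mathcal{A}_X^+)_1$). In any representation $\pi$, Property (T) forces $\|\pi(Q_n)\xi\|\leq(1-\delta)\|\xi\|$ for $\xi\in(\mathcal{H}^\pi)^\perp$, with $\delta=\delta(c,k_n)>0$ uniform over $\pi$: indeed if $\|\pi(Q_n)\xi\|$ is close to $\|\xi\|$ then, by strict convexity applied to the unit vectors $\pi(A_i^{(n)})\xi$, each $\pi(A_i^{(n)})\xi$ is close to $\pi(Q_n)\xi$, hence the differences $\pi(A_i^{(n)})\xi-\pi(A_j^{(n)})\xi$ are small, and combining these with $v=\sum_i f_iA_i$ one deduces $\|\pi(v)\xi-\pi(vv^*)\xi\|$ is small for every partial translation $v$ with support in the $n$-tube, contradicting the Property (T) estimate once $n$ is large. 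On the invariant subspace, $\pi(A_i^{(n)})$ fixes every vector (by Lemma \ref{Lem_WY} and Remark \ref{Rmk_Phi_theta}, since $\Phi_X(A_i^{(n)})=1$), so $\pi(Q_n)\xi=\xi$ for $\xi\in\mathcal{H}^\pi$.

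Next I would upgrade the spectral gap to convergence. The uniform estimate $\|\pi(Q_n)|_{(\mathcal{H}^\pi)^\perp}\|\leq 1-\delta_n$ with $\delta_n$ independent of $\pi$ means that in $C_{u,\max}^*(X)$ the element $Q_n$ has the following property: taking $\pi$ to be the direct sum of all representations (or equivalently working with the universal representation), $\sigma(Q_n)\subseteq\{1\}\cup[-1,1-\delta_n]$, with the eigenvalue $1$ having the invariant subspace as eigenspace. Then $P$ should be obtained by a functional-calculus / continuity argument: one shows the sequence $(Q_n)$, or a suitable polynomial modification $g_n(Q_n)$ chosen so that $g_n$ is $1$ near $1$ and small on $[-1,1-\delta_n]$, is Cauchy in $C_{u,\max}^*(X)$. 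To see Cauchyness one compares $g_n(Q_n)$ and $g_m(Q_m)$ in an arbitrary representation: both act as the identity on $\mathcal{H}^\pi$ and have norm $\leq\varepsilon$ on $(\mathcal{H}^\pi)^\perp$ (the perp spaces coincide, since $\mathcal{H}^\pi$ does not depend on $n$), so $\|g_n(Q_n)-g_m(Q_m)\|_{C_{u,\max}^*(X)}=\sup_\pi\|g_n(Q_n)-g_m(Q_m)\|\leq 2\varepsilon$. Hence $P=\lim_n g_n(Q_n)$ exists in $C_{u,\max}^*(X)$, it is self-adjoint, and $\pi(P)$ is the identity on $\mathcal{H}^\pi$ and zero on $(\mathcal{H}^\pi)^\perp$, i.e.\ the orthogonal projection onto $\mathcal{H}^\pi$; in particular $\pi(P)=\pi(P)^2$ for every $\pi$, so $P=P^2$. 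Finally, since each $g_n(Q_n)$ can be taken to be a real polynomial in $Q_n$ with nonnegative coefficients summing to $1$ evaluated appropriately — or, if that fails, one argues directly that $g_n(Q_n)\to P$ forces $P$ into the closed convex set $\overline{(\mathcal{A}_X^+)_1}$ because $(\mathcal{A}_X^+)_1$ is closed under products and convex combinations and contains $Q_n$ — we conclude $P\in\overline{(\mathcal{A}_X^+)_1}$.

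The main obstacle is the \emph{uniformity} of the spectral gap $\delta_n$ over all representations $\pi$ simultaneously, and relatedly the passage from "$\pi(Q_n)$ has no vector near norm-$1$ in the perp" to a genuine gap in the spectrum that survives in the max-norm. This is where the combinatorial decomposition $v=\sum_{i=0}^{k_n}f_iA_i$, $vv^*=\sum f_i$ of Lemma \ref{Lem_dec_v} does the real work: it lets one bound $\|\pi(v)\xi-\pi(vv^*)\xi\|$ by a fixed multiple (depending only on $k_n$) of $\max_{i,j}\|\pi(A_i^{(n)})\xi-\pi(A_j^{(n)})\xi\|$, and the latter is controlled by the convexity defect of $\|\pi(Q_n)\xi\|$ — all with constants independent of $\pi$, which is exactly what makes the estimate hold in $C_{u,\max}^*(X)$. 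A secondary technical point is ensuring self-adjointness and the $(\mathcal{A}_X^+)_1$-membership are preserved under all the symmetrisations and functional-calculus steps; this is routine since $(\mathcal{A}_X^+)_1$ is a convex semigroup closed under the adjoint. I expect the bulk of the proof to be the careful quantitative argument establishing the uniform gap, with the limiting argument and the identification of $\pi(P)$ being comparatively short.
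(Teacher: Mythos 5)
Your proposal is correct and is essentially the paper's argument: both rest on the decomposition of Lemma \ref{Lem_dec_v}, the parallelogram/uniform-convexity estimate giving a norm gap $\|\pi(Q)\xi\|\leq(1-\delta)\|\xi\|$ on $(\mathcal{H}^\pi)^\perp$ with $\delta$ uniform over all representations, and convergence of powers of an averaging element of $(\mathcal{A}_X^+)_1$ in the maximal norm. The differences are cosmetic: the paper fixes a single controlled generating set and a single element $A=\frac{1}{n}\sum_i\frac{1+A_i}{2}$, and replaces your symmetrisation-plus-functional-calculus step by the purely algebraic identity $\pi(A^k)-p_\pi=(\pi(A)-p_\pi)^k$ together with the bound $\|\pi(A)-p_\pi\|\leq\tilde{\delta}$, which makes your sequence of growing tubes and the self-adjointness/spectral discussion unnecessary.
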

\begin{proof}
Let $E$ be a controlled generating set. By definition, there is a constant $c>0$ such that, for every representation $\pi:\C_u[X]\to\mathcal{B}(\mathcal{H})$ and every $\xi\in(\mathcal{H}^\pi)^\perp$, there is a partial translation $v$ with support in $E$ such that
\begin{align}\label{ineq_T}
\|\pi(v)\xi-\pi(vv^*)\xi\|\geq c\|\xi\|.
\end{align}
By Lemma \ref{Lem_dec_v}, there exist $A_0,A_1,\ldots,A_n\in\mathcal{S}_X$ such that, for every such $v$, there exist $f_0,f_1,\ldots,f_n\in\ell^\infty(X)$ satisfying $f_i^2=f_i$ and
\begin{align*}
v=\sum_{i=0}^n f_iA_i.
\end{align*}
Since $\pi(vv^*)=\sum\pi(f_i)$ and $\|\pi(f_i)\|^2=\|\pi(f_i)\|$, the inequality \eqref{ineq_T} yields
\begin{align*}
c\|\xi\| &\leq \left\|\sum_{i=1}^n\pi(f_i)(\pi(A_i)\xi-\xi)\right\|\\
&\leq \sum_{i=1}^n\|\pi(A_i)\xi-\xi\|.
\end{align*}
This shows that there exists $j$ in $\{1,\ldots,n\}$ such that
\begin{align*}
\|\pi(A_j)\xi-\xi\|\geq \frac{c}{n}\|\xi\|.
\end{align*}
Recall that $A_j$ is a unitary matrix in $\C_u[X]$; see Remark \ref{Rmk_perm_unit}. Hence $\pi(A_j)$ is a unitary operator. By the parallelogram identity,
\begin{align*}
\left\|\frac{\pi(A_j)\xi+\xi}{2}\right\|\leq \delta\|\xi\|,
\end{align*}
where $\delta=\left(1-\left(\frac{c}{2n}\right)^2\right)^{1/2}$. Notice that $\delta$ only depends on the controlled set $E$, and not on the representation $\pi$ or the vector $\xi$. Defining $A\in\left(\mathcal{A}_X^+\right)_1$ by
\begin{align*}
A=\frac{1}{n}\sum_{i=1}^n\frac{1+A_i}{2},
\end{align*}
we see that
\begin{align*}
\|\pi(A)\xi\| &\leq \frac{1}{n}\left\|\frac{\xi+\pi(A_j)\xi}{2}\right\| + \frac{1}{n}\sum_{i\neq j}\left\|\frac{\xi+\pi(A_i)\xi}{2}\right\|\\
&\leq \frac{\delta}{n}\|\xi\| + \frac{n-1}{n}\|\xi\|\\
&= \tilde{\delta}\|\xi\|,
\end{align*}
where $\tilde{\delta}=1-\frac{1-\delta}{n}<1$. This inequality holds for every representation $\pi$ and every $\xi$ in $(\mathcal{H}^\pi)^\perp$. Now let $p_\pi$ be the orthogonal projection onto $\mathcal{H}^\pi$. Observe that $\pi(A)p_\pi=p_\pi$ because $\theta_X(A)=1$; see Lemma \ref{Lem_WY} and Remark \ref{Rmk_Phi_theta}. On the other hand,
\begin{align*}
p_\pi\pi(A)=\left(\pi(A^*)p_\pi\right)^*=p_\pi^*=p_\pi.
\end{align*}
Using these two facts, an induction argument shows that
\begin{align*}
\pi(A^k)-p_\pi=(\pi(A)-p_\pi)^k
\end{align*}
for all $k\geq 1$. Let $\eta\in\mathcal{H}$. By the discussion above, we know that
\begin{align*}
\|\pi(A)\eta-p_\pi\eta\| &= \|\pi(A)(\eta-p_\pi\eta)\|\\
&\leq \tilde{\delta}\|\eta-p_\pi\eta\|\\
&\leq \tilde{\delta}\|\eta\|.
\end{align*}
In other words, $\|\pi(A)-p_\pi\|\leq\tilde{\delta}$. Furthermore, for all $k\geq 1$,
\begin{align}\label{ineq_A^k}
\left\|\pi(A^k)-p_\pi\right\| = \left\|(\pi(A)-p_\pi)^k\right\| \leq\tilde{\delta}^k.
\end{align}
Since this bound holds for every representation, we see that $(A^k)_{k\geq 1}$ is a Cauchy sequence in $C_{u,\max}^*(X)$, and therefore it converges to an element $P$. Since $A^k$ belongs to $\left(\mathcal{A}_X^+\right)_1$ for every $k\geq 1$, $P$ is in the closure of $\left(\mathcal{A}_X^+\right)_1$. The inequality \eqref{ineq_A^k} shows that $\pi(P)=p_\pi$ for every representation $\pi$, which in turn shows that $P$ is itself a projection.
\end{proof}

\begin{proof}[Proof of Theorem \ref{Thm_char_T}]
The proof follows directly from Lemmas \ref{Lem_SC2} and \ref{Lem_NC}.
\end{proof}

\section{{\bf Kazhdan projections and coarsely connected components}}\label{S_cor}

In this section, we study coarsely connected spaces and prove Corollary \ref{Cor_ccc}. Most of the results concerning Property (T) that we present here were already established in \cite{WiYu}. Our main contribution is the description of the Kazhdan projection in each case. We begin by characterising the Kazhdan projection of a finite space.

\begin{lem}\label{Lem_Kazh_finite}
Let $X$ be a finite, coarsely connected space. Then $X$ has Property (T), and its Kazhdan projection $P$ is given by
\begin{align*}
P_{x,y}=\frac{1}{|X|},\quad\forall x,y\in X.
\end{align*}
\end{lem}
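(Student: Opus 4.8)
The plan is to exhibit the matrix $P$ with all entries $\frac{1}{|X|}$ explicitly and check that it is the Kazhdan projection directly, rather than relying on the abstract existence statement of Theorem \ref{Thm_char_T}. First I would observe that since $X$ is finite, $\C_u[X]$ is just the full matrix algebra $M_{|X|}(\C)$ (every subset of $X\times X$ is controlled), and $C_{u,\max}^*(X)$ coincides with $M_{|X|}(\C)$ as well. In particular $P$, the $|X|\times|X|$ matrix with constant entries $\frac{1}{|X|}$, lies in $\C_u[X]$; a one-line computation gives $P=P^*$ and $P^2=P$, so $P$ is a projection. Moreover $P\in\left(\mathcal{A}_X^+\right)_1$ since all entries are nonnegative and each row and column sums to $1$, so $\theta_X(P)=1$.

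Next I would show that for every representation $\pi:\C_u[X]\to\mathcal{B}(\mathcal{H})$, the operator $\pi(P)$ is the orthogonal projection onto $\mathcal{H}^\pi$. Since $P=P^*=P^2$, $\pi(P)$ is an orthogonal projection onto some closed subspace $\mathcal{K}\subseteq\mathcal{H}$; it remains to identify $\mathcal{K}$ with $\mathcal{H}^\pi$. For the inclusion $\mathcal{K}\subseteq\mathcal{H}^\pi$: if $\xi\in\mathcal{K}$, i.e. $\pi(P)\xi=\xi$, then for any $T\in\C_u[X]$ one computes $\Phi_X(T)\cdot P = \theta_X(\cdot)$-type relations — more precisely, a direct matrix computation shows $TP = \Phi_X(T)P$ where $\Phi_X(T)$ is viewed as a diagonal matrix, because the $x$-th row of $TP$ has every entry equal to $\frac{1}{|X|}\sum_{y}T_{x,y}=\frac{1}{|X|}\Phi_X(T)(x)$, which is exactly the $x$-th row of $\Phi_X(T)P$. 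Applying $\pi$ and evaluating at $\xi=\pi(P)\xi$ gives $\pi(T)\xi=\pi(T)\pi(P)\xi=\pi(\Phi_X(T))\pi(P)\xi=\pi(\Phi_X(T))\xi$, so $\xi\in\mathcal{H}^\pi$ by Lemma \ref{Lem_WY}. For the reverse inclusion $\mathcal{H}^\pi\subseteq\mathcal{K}$: if $\xi\in\mathcal{H}^\pi$, then by Lemma \ref{Lem_WY} applied to $T=P$ itself, $\pi(P)\xi=\pi(\Phi_X(P))\xi$; since $P\in\mathcal{A}_X$ with $\theta_X(P)=1$, Remark \ref{Rmk_Phi_theta} gives $\Phi_X(P)=\mathds{1}_X$, so $\pi(P)\xi=\xi$, i.e. $\xi\in\mathcal{K}$.

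Having verified condition (ii) of Theorem \ref{Thm_char_T} for this explicit $P$, I would invoke the equivalence (ii)$\implies$(i) from that theorem (equivalently Lemma \ref{Lem_SC2}) to conclude that $X$ has Property (T), and note that $P$ is then necessarily \emph{the} Kazhdan projection by the uniqueness observed just after the statement of Theorem \ref{Thm_char_T}. This also shows $P$ has the claimed coefficients $P_{x,y}=\frac{1}{|X|}$. Alternatively, one can observe that a finite space is non-amenable if and only if it is... well, actually a finite space is always amenable, so the amenability route would not directly work here — which is the one subtlety worth flagging: Property (T) for \emph{finite} coarsely connected spaces does not follow from the non-amenability characterisation, and must instead be obtained from the projection characterisation, which is why the explicit construction of $P$ is the natural approach. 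I do not expect any serious obstacle; the only point requiring care is the matrix identity $TP=\Phi_X(T)P$ and its transpose version, together with keeping straight which of Lemma \ref{Lem_WY} and Remark \ref{Rmk_Phi_theta} is used for each inclusion.
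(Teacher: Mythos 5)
Your proposal is correct and follows essentially the same route as the paper: exhibit the constant matrix $P$, check $P=P^*=P^2\in\left(\mathcal{A}_X^+\right)_1$, use the identity $TP=\Phi_X(T)P$ together with Lemma \ref{Lem_WY} to identify $\pi(P)$ with the projection onto $\mathcal{H}^\pi$, and then conclude via Theorem \ref{Thm_char_T} and uniqueness. The extra observations (that $\C_u[X]=M_{|X|}(\C)$ for finite $X$, and that the amenability route cannot apply here) are accurate but not needed.
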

\begin{proof}
Let $P\in\C_u[X]$ be given by $P_{x,y}=|X|^{-1}$ for all $x,y\in X$. Observe that $P$ belongs to $(\mathcal{A}_X^+)_1$, and $P=P^*=P^2$. By Theorem \ref{Thm_char_T}, we only need to show that $\pi(P)$ is the projection onto $\mathcal{H}^\pi$ for every representation $\pi:\C_u[X]\to\mathcal{B}(\mathcal{H})$. First, if $\xi$ belongs to $\mathcal{H}^\pi$, by Lemma \ref{Lem_WY} we have
\begin{align*}
\pi(P)\xi=\pi(\Phi_X(P))\xi=\pi(1)\xi=\xi.
\end{align*}
On the other hand, for all $T\in\C_u[X]$ and $x,y\in X$,
\begin{align*}
(TP)_{x,y}=\sum_{z\in X}T_{x,z}|X|^{-1}=\Phi_X(T)_{x,x}|X|^{-1}=\sum_{z\in X}\Phi_X(T)_{x,z}P_{z,y}=(\Phi_X(T)P)_{x,y}.
\end{align*}
Hence, for every $\xi\in\mathcal{H}$,
\begin{align*}
\pi(T)\pi(P)\xi=\pi(\Phi_X(T))\pi(P)\xi,
\end{align*}
which shows that $\pi(P)\xi$ is an invariant vector. We conclude that $\pi(P)$ is the orthogonal projection onto $\mathcal{H}^\pi$.
\end{proof}

Lemma \ref{Lem_Kazh_finite} settles the finite case. The infinite case is even simpler. The following was proved in \cite[Corollary 6.1]{WiYu}. We present here an alternative proof using Proposition \ref{Prop_ultralim}.

\begin{lem}\label{Lem_Kazh_infinite}
Let $X$ be an infinite, coarsely connected space. If $X$ has Property (T), then it is not amenable. In particular, the Kazhdan projection of $C_{u,\max}^*(X)$ is $0$.
\end{lem}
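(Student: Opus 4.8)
The plan is to prove the two assertions in turn. For the first assertion, that an infinite coarsely connected space with Property (T) is not amenable, I would argue by contradiction. Suppose $X$ is amenable. By Lemma \ref{Lem_char_am}, there is a representation $\pi:\C_u[X]\to\mathcal{B}(\mathcal{H})$ with a non-zero invariant vector $\xi\in\mathcal{H}^\pi$, which we may take to be a unit vector. The idea is to ``spread out'' this invariant vector using the coarse geodesic structure of $X$ to produce almost invariant vectors orthogonal to the invariants, contradicting Property (T). More concretely, because $X$ is coarsely connected and monogenic, it is coarsely geodesic, so there are partial translations moving points arbitrarily far apart; applying such a partial translation $w$ to $\xi$ and comparing $\pi(w)\xi$ with $\xi$, one exploits infiniteness to find, for each $n$, a unit vector in $(\mathcal{H}^\pi)^\perp$ that is moved by less than $1/n$ by every partial translation supported in $\operatorname{Tube}_X(n)$. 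Feeding this net into Proposition \ref{Prop_ultralim} gives a non-zero invariant vector in an ultraproduct on which, however, Property (T) would force a uniform lower bound — the contradiction. (Alternatively, one can invoke Lemma \ref{Lem_no_T->amen} in reverse: if $X$ were amenable and had Property (T), one directly contradicts the construction of almost-invariant vectors; but the cleanest route is the ultraproduct argument just sketched, which is exactly the ``alternative proof using Proposition \ref{Prop_ultralim}'' the statement advertises.)

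For the second assertion, that the Kazhdan projection of $C_{u,\max}^*(X)$ is $0$, I would proceed as follows. By Theorem \ref{Thm_char_T}, since $X$ has Property (T), the Kazhdan projection $P$ exists and is characterised by the property that $\pi(P)$ is the orthogonal projection onto $\mathcal{H}^\pi$ for every representation $\pi$. By the first assertion, $X$ is not amenable, so by Lemma \ref{Lem_char_am} every representation $\pi:\C_u[X]\to\mathcal{B}(\mathcal{H})$ has $\mathcal{H}^\pi=\{0\}$. Hence $\pi(P)=0$ for every representation $\pi$. By the very definition of the norm on $C_{u,\max}^*(X)$ as a supremum over representations, $\|P\|_{C_{u,\max}^*(X)}=\sup_\pi\|\pi(P)\|=0$, so $P=0$.

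The first assertion is the substantive one and also the main obstacle: one must genuinely use both infiniteness and coarse connectedness to manufacture almost-invariant vectors in the orthogonal complement of the invariants. The key point is that in an infinite coarsely geodesic space one can translate a fixed invariant vector $\xi$ by partial translations $w_k$ whose propagation grows, and the vectors $\pi(w_k)\xi$, while each ``locally looking like'' $\xi$, drift away from $\xi$ in norm so that suitable combinations or differences land (approximately) in $(\mathcal{H}^\pi)^\perp$; making this quantitative and uniform in the representation is where the care is needed, and it is precisely what Proposition \ref{Prop_ultralim} is designed to handle once the net of almost-invariant unit vectors orthogonal to the invariants has been produced. The second assertion is then a short formal consequence of Lemma \ref{Lem_char_am} and the definition of $C_{u,\max}^*(X)$, requiring no further work.
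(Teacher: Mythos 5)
Your handling of the second assertion is correct and is exactly the paper's: once non-amenability is known, Lemma \ref{Lem_char_am} gives $\mathcal{H}^\pi=\{0\}$ for every representation, hence $\pi(P)=0$ for all $\pi$, and $P=0$ by the definition of the norm on $C_{u,\max}^*(X)$. The gap is in the first assertion. From Lemma \ref{Lem_char_am} you only get an abstract representation $\pi$ with a non-zero invariant vector $\xi$, and you propose to ``spread it out'' by partial translations of large propagation to manufacture almost-invariant unit vectors in $(\mathcal{H}^\pi)^\perp$. This cannot work as sketched: invariance means precisely that $\pi(w)\xi=\pi(ww^*)\xi$ for every partial translation $w$, so translating $\xi$ never makes it ``drift''; it only compresses $\xi$ by projections, and nothing in your construction produces vectors orthogonal to $\mathcal{H}^\pi$, let alone vectors that are almost invariant uniformly over $\operatorname{Tube}_X(n)$. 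More fundamentally, the mere existence of a representation with a non-zero invariant vector is not in tension with Property (T): finite spaces are amenable and have Property (T), so infiniteness and coarse connectedness must enter through some concrete representation, which your sketch never uses. (Proposition \ref{Prop_ultralim} does not repair this: it only converts an already-given net of almost-invariant vectors into an invariant vector, and the resulting invariant vector is not in the orthogonal complement of the invariants of the ultraproduct, so Property (T) alone gives no ``uniform lower bound'' on it. Also, invoking Lemma \ref{Lem_no_T->amen} ``in reverse'' is a logical slip: that lemma says failure of (T) implies amenability, whose contrapositive is non-amenability implies (T), not the implication you need here.)

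The missing ingredient, and the paper's actual route, is the natural representation $\lambda$ on $\ell^2(X)$. Amenability yields (by \cite[Lemma 6.1]{WiYu}) a net of unit vectors $(\xi_i)$ in $\ell^2(X)$ with $\|v\xi_i-vv^*\xi_i\|\to 0$ for every partial translation $v$. Since $X$ is infinite and coarsely connected, $\ell^2(X)$ has no non-zero invariant vectors --- this is the only place infiniteness and coarse connectedness are used --- so $\lambda(P)=0$, where $P$ is the Kazhdan projection provided by Theorem \ref{Thm_char_T}. Proposition \ref{Prop_ultralim} then turns the net into a non-zero invariant vector $\xi=(\xi_i)_{\mathcal U}$ for the ultraproduct representation $\pi$, whence $\|\xi\|=\|\pi(P)\xi\|=\lim_{\mathcal U}\|\lambda(P)\xi_i\|=0$, a contradiction. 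Note that the direction of travel is opposite to yours: amenability is converted into concrete almost-invariant vectors in $\ell^2(X)$, not into an abstract invariant vector, and the uniformity over representations that you acknowledge as the delicate point is supplied by the Kazhdan projection, not by Proposition \ref{Prop_ultralim}.
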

\begin{proof}
Assume by contradiction that $X$ is amenable. By \cite[Lemma 6.1]{WiYu}, there is a net of unit vectors $(\xi_i)_{i\in I}$ in $\ell^2(X)$ such that, for every partial translation $v$,
\begin{align*}
\|v\xi_i-vv^*\xi_i\|\to 0.
\end{align*}
Let $\lambda:C_{u,\max}^*(X)\to\mathcal{B}(\ell^2(X))$ be the extension of the natural representation of $\C_u[X]$ on $\ell^2(X)$. Let $\mathcal{U}$ be an ultrafilter on $I$, and let $\pi$ denote the ultraproduct representation on $\mathcal{H}=(\ell^2(X))_{\mathcal{U}}$. By Proposition \ref{Prop_ultralim}, $\xi=(\xi_i)_{\mathcal{U}}$ is a non-trivial invariant vector for $\pi$. In other words, if $P$ denotes the Kazhdan projection of $C_{u,\max}^*(X)$, then $\pi(P)\xi=\xi\neq 0$. On the other hand, since $X$ is infinite and coarsely connected, $\ell^2(X)$ does not have non-trivial invariant vectors. In particular, $\lambda(P)\xi_i=0$. This shows that
\begin{align*}
\|\xi\|=\|\pi(P)\xi\|=\lim_{\mathcal{U}}\|\lambda(P)\xi_i\|=0,
\end{align*}
which is a contradiction. We conclude that $X$ is not amenable. Therefore, by Lemma \ref{Lem_char_am}, $P$ is $0$.
\end{proof}

Now we have completely characterised Kazhdan projections of coarsely connected spaces. In order to prove Corollary \ref{Cor_ccc}, we need to study the restrictions of a Kazhdan projection on a space to its coarse components.

\begin{lem}\label{Lem_Q}
Let $X$ be a space, and let $X_0$ be a coarse component of $X$. The map $Q:\C_u[X]\to\C_u[X_0]$ defined by
\begin{align*}
T\mapsto T\mathds{1}_{X_0},\quad\forall T\in\C_u[X],
\end{align*}
where $\mathds{1}_{X_0}$ is the indicator function of $X_0$ seen as a diagonal operator, extends to a unital $\ast$-homomorphism $Q:C_{u,\max}^*(X)\to C_{u,\max}^*(X_0)$.
\end{lem}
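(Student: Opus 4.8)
The plan is to first verify that $Q$ is a well-defined unital $\ast$-homomorphism at the level of the translation algebras, and then to check that it is contractive for the maximal norms, so that it extends by density and continuity.

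The key observation is that $\mathds{1}_{X_0}$, viewed as a diagonal element of $\C_u[X]$, is a self-adjoint central projection; self-adjointness and idempotency are immediate, and centrality is exactly where the hypothesis that $X_0$ is a \emph{coarse component} enters. Indeed, for $T\in\C_u[X]$ an entry $T_{x,y}$ can be nonzero only when $(x,y)\in\operatorname{supp}(T)$, which is controlled, so $d(x,y)<\infty$; hence $x\in X_0$ if and only if $y\in X_0$, which gives $\mathds{1}_{X_0}(x)T_{x,y}=T_{x,y}\mathds{1}_{X_0}(y)$ for all $x,y$, i.e. $\mathds{1}_{X_0}T=T\mathds{1}_{X_0}$. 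The same computation shows that $T\mathds{1}_{X_0}$ is supported in $X_0\times X_0$, with controlled support and bounded entries, so $Q(T)\in\C_u[X_0]$; moreover $\mathds{1}_{X_0}$ restricted to $X_0\times X_0$ is the unit of $\C_u[X_0]$, so $Q(1)=1$. Using centrality and idempotency, $Q(ST)=ST\mathds{1}_{X_0}=S\mathds{1}_{X_0}T\mathds{1}_{X_0}=Q(S)Q(T)$ and $Q(T^*)=T^*\mathds{1}_{X_0}=(\mathds{1}_{X_0}T)^*=(T\mathds{1}_{X_0})^*=Q(T)^*$, so $Q$ is a unital $\ast$-homomorphism $\C_u[X]\to\C_u[X_0]$.

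For the extension it suffices to prove $\|Q(T)\|_{C_{u,\max}^*(X_0)}\le\|T\|_{C_{u,\max}^*(X)}$ for every $T\in\C_u[X]$, since then $Q$ is a contraction on a dense $\ast$-subalgebra. Given any representation $\rho\colon\C_u[X_0]\to\mathcal{B}(\mathcal{H})$, the composition $\rho\circ Q\colon\C_u[X]\to\mathcal{B}(\mathcal{H})$ is a $\ast$-homomorphism, and it is unital because $Q(1)=1$ and $\rho$ is unital; hence $\rho\circ Q$ is a representation of $\C_u[X]$, so $\|\rho(Q(T))\|\le\|T\|_{C_{u,\max}^*(X)}$ by definition of the maximal norm. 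Taking the supremum over all such $\rho$ yields the desired inequality. Consequently $Q$ extends uniquely to a bounded linear map $C_{u,\max}^*(X)\to C_{u,\max}^*(X_0)$, which remains a unital $\ast$-homomorphism by continuity of the algebraic operations and of the involution.

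The only mild point to record, so that the statement makes sense, is that $X_0$ is itself a space in the sense of the Convention: bounded geometry and coarse connectedness are inherited immediately, and $X_0$ is monogenic because if $E$ is a controlled generating set for $X$ then $E\cap(X_0\times X_0)$ is one for $X_0$ (any chain in $X$ joining two points of $X_0$ stays inside $X_0$, since consecutive points are at finite distance). No serious obstacle arises; the substantive step is the centrality of $\mathds{1}_{X_0}$, and everything else is a routine verification plus the standard universal-property argument for the maximal completion.
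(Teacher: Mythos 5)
Your proof is correct and follows essentially the same strategy as the paper: check that $Q$ is a unital $\ast$-homomorphism on $\C_u[X]$, then show it is contractive for the maximal norms by pulling each representation of $\C_u[X_0]$ back to a representation of $\C_u[X]$. The only (harmless) difference is that you use $\rho\circ Q$ directly where the paper instead forms $\tilde{\pi}(T_0\oplus T')=\pi(T_0)\oplus T'$ on $\mathcal{H}\oplus\ell^2(X\setminus X_0)$; both give the same estimate, and your explicit verification of the centrality of $\mathds{1}_{X_0}$ (via coarse components) spells out a point the paper leaves implicit.
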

\begin{proof}
Observe that $Q$ is a unital $\ast$-homomorphism from $\C_u[X]$ to $\C_u[X_0]$. We only need to show that it extends to a continuous map from $C_{u,\max}^*(X)$ to $C_{u,\max}^*(X_0)$. Let $T\in\C_u[X]$, and let us write it by blocks as follows:
\begin{align*}
T=T_0\oplus T',
\end{align*}
where $T_0\in\C_u[X_0]$ and $T'\in\C_u[X\setminus X_0]$. Observe that $Q(T)=T_0$. Let us now take a representation $\pi:\C_u[X_0]\to\mathcal{B}(\mathcal{H})$, and let $\tilde{\pi}:\C_u[X]\to\mathcal{B}(\mathcal{H}\oplus\ell^2(X\setminus X_0))$ be given by
\begin{align*}
\tilde{\pi}(T_0\oplus T')=\pi(T_0)\oplus T'.
\end{align*}
Then
\begin{align*}
\|\pi(T_0)\|_{\mathcal{B}(\mathcal{H})}
&\leq \max\left\{\|\pi(T_0)\|_{\mathcal{B}(\mathcal{H})},\|T'\|_{\mathcal{B}(\ell^2(X\setminus X_0))}\right\}\\
&=\|\tilde{\pi}(T_0\oplus T')\|_{\mathcal{B}(\mathcal{H}\oplus\ell^2(X\setminus X_0))}\\
&\leq \|T_0\oplus T'\|_{C_{u,\max}^*(X)}.
\end{align*}
Taking the supremum over all the representations of $\C_u[X_0]$, we find
\begin{align*}
\|Q(T)\|_{C_{u,\max}^*(X_0)}\leq\|T\|_{C_{u,\max}^*(X)},
\end{align*}
which completes the proof.
\end{proof}

\begin{lem}\label{Lem_Q(P)}
Let $X$ be a space, $X_0$ a coarse component of $X$, and $Q$ the map defined in Lemma \ref{Lem_Q}. If $X$ has Property (T), then so does $X_0$. Moreover, if $P$ is the Kazhdan projection of $C_{u,\max}^*(X)$, then $Q(P)$ is the Kazhdan projection of $C_{u,\max}^*(X_0)$.
\end{lem}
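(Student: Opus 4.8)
The plan is to verify condition (ii) of Theorem \ref{Thm_char_T} for the space $X_0$, with $Q(P)$ as the candidate projection. Since $Q$ is a unital $\ast$-homomorphism and $P$ is a projection, $Q(P)$ is automatically a projection in $C_{u,\max}^*(X_0)$, so what remains is to show that $\sigma(Q(P))$ is the orthogonal projection onto $\mathcal{H}^\sigma$ for every representation $\sigma:\C_u[X_0]\to\mathcal{B}(\mathcal{H})$. Once this is done, Theorem \ref{Thm_char_T} gives at once that $X_0$ has Property (T) and, by uniqueness of the Kazhdan projection, that $Q(P)$ is the Kazhdan projection of $C_{u,\max}^*(X_0)$.

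Given $\sigma$, I would reuse the construction from the proof of Lemma \ref{Lem_Q}: starting from the block decomposition $\C_u[X]=\C_u[X_0]\oplus\C_u[X\setminus X_0]$ — which holds because a finite propagation operator has no nonzero entries between distinct coarse components — set $\tilde\sigma(T_0\oplus T')=\sigma(T_0)\oplus T'$ on $\mathcal{H}\oplus\ell^2(X\setminus X_0)$. This is a representation of $\C_u[X]$, so it extends to $C_{u,\max}^*(X)$ and $\tilde\sigma(P)$ is the orthogonal projection onto the subspace of $\tilde\sigma$-invariant vectors. The idea is then to compute the compression of $\tilde\sigma(P)$ to the summand $\mathcal{H}$ in two ways. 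On one hand, $\mathds{1}_{X_0}$ is central in $\C_u[X]$, hence $\tilde\sigma(\mathds{1}_{X_0})$ — which is the orthogonal projection onto $\mathcal{H}$ — commutes with $\tilde\sigma(P)$, so $\tilde\sigma(P)$ is block diagonal with respect to $\mathcal{H}\oplus\ell^2(X\setminus X_0)$. On the other hand, the identity $\tilde\sigma(\mathds{1}_{X_0}T\mathds{1}_{X_0})=\sigma(Q(T))\oplus 0$ holds for $T\in\C_u[X]$, and therefore, by continuity of $Q$, of $\sigma$ and of $\tilde\sigma$ on the maximal completions, for every element of $C_{u,\max}^*(X)$; applied to $P$ it shows that the compression of $\tilde\sigma(P)$ to $\mathcal{H}$ equals $\sigma(Q(P))$.

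It then remains to identify that compression directly. Because $\tilde\sigma(P)$ is block diagonal, its compression to $\mathcal{H}$ is the orthogonal projection onto $\{\xi\in\mathcal{H} : \xi\oplus 0 \text{ is }\tilde\sigma\text{-invariant}\}$, and I claim this set is exactly $\mathcal{H}^\sigma$. For this I would use that every partial translation on $X$ has the form $v_0\oplus v'$, with $v_0$ a partial translation on $X_0$ and $v'$ one on $X\setminus X_0$ — the graph, being controlled, cannot leave a coarse component — and, conversely, that $v_0\oplus 0$, where $0$ denotes the empty partial translation, is a partial translation on $X$. The first statement yields the inclusion $\mathcal{H}^\sigma\subseteq\{\xi\in\mathcal{H} : \xi\oplus 0 \text{ is }\tilde\sigma\text{-invariant}\}$, and the second yields the reverse inclusion, since invariance of $\xi\oplus 0$ under all the $v_0\oplus 0$ forces $\sigma(v_0)\xi=\sigma(v_0v_0^*)\xi$ for every partial translation $v_0$ on $X_0$. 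Comparing the two descriptions of the compression then shows that $\sigma(Q(P))$ is the orthogonal projection onto $\mathcal{H}^\sigma$, as needed.

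I expect the last step to be the main obstacle: the precise description of partial translations on $X$ relative to a coarse component and its complement — in particular the admissibility of the empty partial translation — is exactly what forces the $\mathcal{H}$-part of the $\tilde\sigma$-invariant subspace to be $\mathcal{H}^\sigma$, and not something larger coming from the $\ell^2(X\setminus X_0)$ factor. The remaining ingredients, namely the centrality of $\mathds{1}_{X_0}$, the resulting block-diagonality, and the passage from $\C_u[X]$ to $C_{u,\max}^*(X)$ by continuity, are routine.
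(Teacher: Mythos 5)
Your argument is correct, but it takes a different route from the paper. The paper, given a representation $\sigma$ of $\C_u[X_0]$, simply pulls it back along $Q$: it sets $\tilde\pi=\sigma\circ Q$, which is a representation of $\C_u[X]$ on the \emph{same} Hilbert space $\mathcal{H}$, so $\tilde\pi(P)=\sigma(Q(P))$ with no compression needed; it then identifies $\mathcal{H}^{\tilde\pi}=\mathcal{H}^{\sigma}$ using the characterisation of invariant vectors via $\Phi_X$ (Lemma \ref{Lem_WY}), which boils down to the identity $Q(\Phi_X(T))=\Phi_{X_0}(Q(T))$ and the surjectivity of $Q$ onto $\C_u[X_0]$. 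You instead go forward: you enlarge $\sigma$ to the representation $\tilde\sigma(T_0\oplus T')=\sigma(T_0)\oplus T'$ on $\mathcal{H}\oplus\ell^2(X\setminus X_0)$ from the proof of Lemma \ref{Lem_Q}, use the centrality of $\mathds{1}_{X_0}$ (valid, since finite propagation operators have no entries between coarse components) to get block-diagonality of $\tilde\sigma(P)$, identify the $\mathcal{H}$-corner with $\sigma(Q(P))$ by continuity, and then pin down the $\mathcal{H}$-part of the $\tilde\sigma$-invariant subspace as $\mathcal{H}^{\sigma}$ via the decomposition $v=v_0\oplus v'$ of partial translations and the admissibility of $v_0\oplus 0$ — the point you rightly flagged as the crux, and which does go through. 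The paper's pull-back is shorter because it avoids the auxiliary Hilbert space, the compression, and the analysis of partial translations entirely; your version is a bit heavier but gives a concrete block picture of how the Kazhdan projection decomposes over the components, in the spirit of Corollary \ref{Cor_ccc}. Both correctly use the defining property of $P$ applied to the auxiliary representation and the uniqueness of the Kazhdan projection of $C_{u,\max}^*(X_0)$ via Theorem \ref{Thm_char_T}.
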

\begin{proof}
Since $Q$ is a $\ast$-homomorphism, $Q(P)$ is a projection. Now let $\pi:\C_u[X_0]\to\mathcal{B}(\mathcal{H})$ be a representation, and define $\tilde{\pi}:\C_u[X]\to\mathcal{B}(\mathcal{H})$ by $\tilde{\pi}=\pi\circ Q$. Again, since $Q$ is a $\ast$-homomorphism, $\tilde{\pi}$ is indeed a representation. Furthermore, $\tilde{\pi}(P)$ is the orthogonal projection onto the subspace of invariant vectors $\mathcal{H}^{\tilde{\pi}}$. Since $\tilde{\pi}(P)=\pi(Q(P))$, we only need to show that $\mathcal{H}^{\tilde{\pi}}=\mathcal{H}^{\pi}$. This is true because
\begin{align*}
\xi\in\mathcal{H}^{\tilde{\pi}} &\iff \tilde{\pi}(T)\xi=\tilde{\pi}(\Phi_X(T))\xi,\quad\forall T\in\C_u[X]\\
&\iff \pi(Q(T))\xi=\pi(Q(\Phi_X(T)))\xi,\quad\forall T\in\C_u[X]\\
&\iff \pi(T_0)\xi=\pi(\Phi_{X_0}(T_0))\xi,\quad\forall T_0\in\C_u[X_0]\\
&\iff \xi\in\mathcal{H}^{\pi}.
\end{align*}
\end{proof}

Now we are ready to prove Corollary \ref{Cor_ccc}.

\begin{proof}[Proof of Corollary \ref{Cor_ccc}]
The fact that each coarse component $X_n$ has Property (T) follows from Lemma \ref{Lem_Q(P)}. Moreover, Lemma \ref{Lem_Q(P)} also implies that $Q_n(P)$ is the Kazhdan projection of $C_{u,\max}^*(X_n)$. The description of $Q_n(P)$ in each case follows from Lemmas \ref{Lem_Kazh_finite} and \ref{Lem_Kazh_infinite}.
\end{proof}

\subsection*{Acknowledgements}
This work originated from a series of very enlightening discussions that I had with Mikael de la Salle in Lyon during a research visit in February 2023. I am very grateful to him for his inspiring ideas, and to the members of the \textit{Institut Camille Jordan} for their hospitality. I also thank Rufus Willett for his very valuable feedback. Finally, I thank the anonymous referee, whose comments and suggestions led to several improvements in the contents and presentation of this paper.

\bibliographystyle{plain} 

\bibliography{Bibliography}

\end{document}